\newcommand{\C}{{\mathbb C}}
\newcommand{\N}{{\mathbb N}}
\newcommand{\R}{{\mathbb R}}
\newcommand{\Z}{{\mathbb Z}}
\newcommand{\abs}[2][\empty]{\ifx#1\empty\left|#2\right|%
\else#1\vert #2 #1\vert\fi}
\newcommand{\Cnt}[1][]{{\mathcal C}^{#1}}
\newcommand{\co}{\mathop{\mathrm{co}}}
\newcommand{\csub}{\subset\subset}
\newcommand{\defstyle}[1]{\emph{#1}}
\newcommand{\eps}{\varepsilon}
\newcommand{\ext}{\mathop{\mathrm{ext}}\nolimits}
\newcommand{\Fourier}{\mathcal F}
\newcommand{\fourier}{\widehat}
\newcommand{\Idem}{\mathrm{Id}}
\renewcommand{\implies}{\Rightarrow}
\newcommand{\interl}{\mathop{\mathrm{interl}}}
\renewcommand{\land}{\mathrel{\&}}
\newcommand{\norm}[2][\empty]{\ifx#1\empty\left\Vert#2\right\Vert%
\else#1\Vert #2 #1\Vert\fi}
\newcommand{\Powerset}{{\mathcal P}_\emptyset}
\newcommand{\supp}{\mathop{\mathrm{supp}}}
\newcommand{\Sphere}{\mathbf S}
\newcommand{\Schwartz}{\mathscr{S}}
\newcommand{\test}{\mathcal D}
\newcommand{\Gen}{{\mathcal G}}
\newcommand{\GenC}{\widetilde\C}
\newcommand{\GenR}{\widetilde\R}
\newcommand{\Mod}{{\mathcal M}}
\newcommand{\Null}{{\mathcal N}}
\newcommand{\caninf}{\rho}
\newcommand{\slow}{\mathit{slow}}
\newcommand{\fast}{\mathit{fast}}
\newcommand{\ster}[1]{\prescript{*}{}{#1}}
\newcommand{\sterco}{\mathop{\ster\co}}
\newcommand{\stercup}{\mathrel{\ster\cup}}
\newcommand{\sternotin}{\mathrel{\ster\notin}}
\newcommand{\sterne}{\mathrel{\ster\ne}}
\newtheorem{theorem}{Theorem}[section]
\newtheorem{lemma}[theorem]{Lemma}
\newtheorem{proposition}[theorem]{Proposition}
\newtheorem{corollary}[theorem]{Corollary}
\theoremstyle{definition}
\newtheorem{definition}[theorem]{Definition}
\newtheorem{remark}[theorem]{Remark}
\newtheorem{notation}[theorem]{Notation}
\numberwithin{equation}{section}
\begin{document}
\title[Internal objects and Microlocal analysis]
{An application of internal objects to microlocal analysis in generalized function algebras}
\author{H.~Vernaeve}
\thanks{Supported by grant 1.5.138.13N of the Research Foundation Flanders FWO}

\begin{abstract}
We illustrate the use of internal objects in the nonlinear theory of generalized functions by means of an application to microlocal analysis in Colombeau algebras.
\end{abstract}
\maketitle

\section{Introduction}
As is well known, the Colombeau algebras $\Gen(\Omega)$ \cite{Colombeau84,Colombeau85,GKOS} are differential algebras of generalized functions containing the space of Schwartz distributions. They have found diverse applications in the study of partial differential equations \cite{Hormann-DeHoop,Hormann-Ober-Pilipovic,Ob92}, providing a framework in which nonlinear equations and equations with strongly singular data or coefficients can be solved and in which their regularity can be analyzed.

The natural extension of microlocal analysis of Schwartz distributions to the Colombeau generalized function algebras is the so-called $\Gen^\infty$-microlocal analysis, which has been developed using the concept of $\Gen^\infty$-regularity \cite{Dapic98,GGO05,Garetto-Hormann05,Garetto-Ober14,Hormann-Ober-Pilipovic}. Recently, we introduced a refinement called $\widetilde\Gen^\infty$-microlocal analysis in which one can consider microlocal regularity at generalized points $(x_0,\xi_0)$ in the cotangent bundle of the domain \cite{HVMicrolocal}. Also the neighbourhoods which can be considered have generalized (infinitesimal) radii. The reason to introduce this refinement is the following: since equations with strongly singular data or coefficients in Colombeau algebras are modeled by regularization, the corresponding differential operators themselves become generalized operators. Hence it is to be expected that the most suitable setting to study the propagation of singularities under such operators is by means of generalized objects (generalized characteristic varieties, etc.).

The development of $\widetilde\Gen^\infty$-microlocal analysis has been obtained using the principles introduced in \cite{HVNSPrinciples,HV-NSPrinciples-tutorial} which originate from nonstandard analysis, although \cite{HVMicrolocal} does not require the knowledge of these principles from the reader. That writing style comes with two disadvantages: a loss of intuition, and rather technical proofs. In this paper, we show that these principles are developed well enough to write the complete proofs of these recent research results, thus revealing the underlying intuition more clearly. At several places (especially in Theorem \ref{thm-projection-of-wave-front-2}), we were able to significantly simplify the proofs in \cite{HVMicrolocal}. We hope that this may serve as an example for researchers in the field, helping them to use the same techniques also in their own work.

To keep the paper self-contained, we recall the definitions of the internal objects and the principles that are used.

\section{Internal objects}
The internal objects that we will consider are closely related to the approach in \cite{Schmieden-Laugwitz} (see also \cite{Tao}).

Let $\emptyset\ne A\subseteq \R^d$ and $I:= (0,1]\subseteq\R$. We denote
\[\ster A:= {A^ I}_{/\sim}\]
where $\sim$ is the equivalence relation defined by
\[(x_\eps)_{\eps\in I}\sim (y_\eps)_{\eps\in I} \iff (\exists \eps_0) (\forall \eps\in (0,\eps_0]) (x_\eps = y_\eps)\] which we read as: $x_\eps=y_\eps$ for (sufficiently) small $\eps$. We denote by $[x_\eps]\in \ster A$ the equivalence class of $(x_\eps)_\eps\in A^I$. By definition, elements of $\ster A$ are called internal. We denote $\ster a:= [a]$ (the equivalence class of the constant family $(a)_{\eps\in I}$). Since this defines an injection $\ster{}$: $A\to\ster A$, we will identify $a\in A$ with $\ster a\in\ster A$. It is clear that $\ster (\R^d)$ and $(\ster \R)^d$ are isomorphic, and we will identify both.

For any map $f$: $A\to\R^{d'}$, there is a canonical extension
\begin{equation}\label{eq-generalized-function}
\ster f: \ster A\to\ster\R^{d'}: f([x_\eps]):= [f(x_\eps)].
\end{equation}
Since it is a canonical extension, it is customary to write $f:= \ster f$.

If $\emptyset\ne A_\eps\subseteq\R^d$ (for each $\eps\in I$), we define a set
\[[A_\eps]:= \{[x_\eps]: x_\eps\in A_\eps, \text{ for small }\eps\}\subseteq\ster\R^d.\]
By definition, such subsets are called internal. In particular, $\ster A= [A]$ is internal.

More generally than in equation \eqref{eq-generalized-function}, if $\emptyset\ne A_\eps\subseteq\R^d$ and $f_\eps$ are maps $A_\eps\to \R^{d'}$ (for each $\eps\in I$), we define a map
\[[f_\eps]: [A_\eps]\to\ster\R^{d'}:  [f_\eps]([x_\eps]):= [f_\eps(x_\eps)].\]
By definition, such maps are called internal.

Any binary relation $R$ on $A$ has an extension on $\ster A$ (which is also called internal):
\[[x_\eps]\, \ster R\, [y_\eps] \iff x_\eps \, R\, y_\eps, \text{ for small }\eps.\]
Since it is a canonical extension, it is customary to write $R:= \ster R$. Caution: $ x\ster (\lnot R) y$ is not equivalent with $\lnot( x  \mathop{\ster R} y)$, hence we will not drop stars in $\sterne$, $\ster{\not\le}$, \dots to avoid ambiguities (in contrast, we use $x\ne y$ for $\lnot(x=y)$).

If $X_\eps$ are nonempty sets of maps $A\to\R^{d'}$ (for each $\eps\in I$), we define
\[[X_\eps] := \{[f_\eps]: f_\eps\in X_\eps, \text{ for small }\eps\}.\]
By definition, $[X_\eps]$ consists of internal maps only. We define again $\ster X:= [X]$. Then for any $f\in X$, its canonical extension $\ster f$ belongs to $\ster X$. In this paper, we will mainly consider $\ster X$ for a function space $X\subseteq\Cnt[\infty](\R^d)$.

We denote by $\Powerset(A)$ the set of all \emph{nonempty} subsets of $A$. If $\emptyset\ne B\subseteq \Powerset(\R^d)$, we define
\[\ster B:= \{[A_\eps]: A_\eps\in B,\text{ for small }\eps\}.\]
Thus $\ster(\Powerset(\R^d))$ is the set of all internal subsets of $\ster \R^d$ as defined above (notice that $\emptyset$ is not internal by definition).

More generally, if (for each $\eps$) $f_\eps$ are maps $X_1\times \cdots\times X_m \to Y_1\times \cdots \times Y_n$, where each $X_i$ and $Y_j$ either is a nonempty subset of $\R^d$ or a nonempty set of maps $A\to \R^{d'}$ or a nonempty subset of $\Powerset(B)$ (where $d$, $d'$ and $A,B\subseteq\R^d$ may depend on $i$, $j$), we define a map
\[[f_\eps]: \ster X_1\times \cdots\times \ster X_m\to\ster Y_1\times \cdots\times \ster Y_n:  [f_\eps]([x_{1,\eps}], \dots, [x_{m,\eps}]):= [f_\eps(x_{1,\eps},\dots,x_{m,\eps})]\]
where we identify $\ster(Y_1\times \cdots\times Y_n)\cong \ster Y_1 \times \cdots\times \ster Y_n$, i.e.\ we identify
\[[f_\eps(x_\eps)] = \bigl( [f_{1,\eps}(x_\eps)], \dots, [f_{m,\eps}(x_\eps)]\bigr) .\]
By definition, also such maps are called internal. We define again $\ster f:= [f]$.

E.g., in this paper, we will use
\[A \stercup B,
\quad\mathop{\ster\sup} A,\quad \ster\partial^\alpha f,\quad \ster{\int_A} f \quad \text{and}\quad \ster\Fourier(f)\]
for internal $A$, $B$, $\alpha$, $f$, where we consider
\begin{align*}
\cup: &\Powerset(\R^d)\times\Powerset(\R^d)\to \Powerset(\R^d)&
\sup:& \{A\subseteq\R: A\ne\emptyset\text{ is bounded}\}\to\R\\
 \partial: &\N^d\times \Cnt[\infty](\R^d)\to \Cnt[\infty](\R^d)&
\int: &\{A\subseteq \R^d: A\ne\emptyset\text{ is measurable}\}\times L^1(\R^d)\to \C\\
\Fourier:&\Schwartz(\R^d)\to\Schwartz(\R^d).
\end{align*}
As differentiation, integration and Fourier transform are among the most basic operations in analysis, we will write $\partial:= \ster\partial$, $\int:= \ster\int$ and $\Fourier:= \ster\Fourier$ (this corresponds to the usual $\eps$-wise definitions in Colombeau theory). As $\ster\sup$ is the supremum for the partial order on $\ster\R$, we will write $\sup:= \ster\sup$. However, as $\ster\cup$ is not the set-theoretic union, we will not drop stars here.

\smallskip
The above construction can be inductively extended to larger classes of objects, all of which are called internal \cite{HVNSPrinciples}, but the above definitions suffice for this paper.

\section{Principles from nonstandard analysis}
Proving properties about internal objects is considerably simplified through the following principles \cite{HVNSPrinciples}.

\begin{definition}
An object $a$ is called transferrable if $\ster a$ is defined.

\emph{Transferrable formulas} are formal expressions containing symbols called \defstyle{variables}. Particular kinds of variables are \defstyle{relation variables} and \defstyle{function variables}.\\
Inductively, \defstyle{terms} are defined by the following rules:
\begin{enumerate}
\item A variable is a term.
\item If $t_1$, \dots, $t_m$ are terms ($m>1$), then also $(t_1, \dots, t_m)$ is a term.
\item If $t$ is a term and $f$ is a function variable, then also $f(t)$ is a term.
\end{enumerate}

Transferrable formulas are defined by the following rules:
\begin{enumerate}
\item[F1.] (atomic formulas) If $t_1$, $t_2$ are terms and $R$ is a relation variable, then $t_1=t_2$, $t_1\in t_2$ and $t_1 R\, t_2$ are formulas.
\item[F2.] If $P$, $Q$ are formulas, then $P\land Q$ is a formula.
\item[F3.] If $P$ is a formula, $x$ is a variable free in $P$ and $t$ is a term in which $x$ does not occur, then $(\exists x\in t) P$ is a formula.
\item[F4.] If $P$ is a formula, $x$ is a variable free in $P$ and $t$ is a term in which $x$ does not occur, then $(\forall x\in t) P$ is a formula.
\item[F5.]\label{rule_impl} If $P$, $Q$ are formulas, then $[(\exists x\in t)P]\,\&\, [(\forall x\in t) (P\implies Q)]$ is a formula.
\end{enumerate}
\end{definition}
In practice, we will use rule [F5] in formulas by simply writing $(\forall x\in t) (P\implies Q)$, silently checking that the side condition $(\exists x\in t) P$ is fulfilled.

\begin{notation}
We write $P(x_1,\dots, x_m)$ for a formula $P$ in which the only occurring free variables are $x_1$, \dots, $x_m$. We denote by $P(c_1,\dots, c_m)$ the formula $P$ in which the variable $x_j$ has been substituted by the object $c_j$ (for $j=1,\dots, m$). In this case, $c_j$ are called the constants occurring in the formula. Relation variables (resp.\ function variables) can only be substituted by (binary) relations (resp.\ functions).\\
The slight ambiguity that might result from these notations is similar to the notation for a function $f(x)$ and its value $f(c)$, and is clarified by the context.\\
An \emph{internal formula} is a transferrable formula in which all constants are internal.
\end{notation}

Notice that disjunction ($\lor$) and negation ($\lnot$) are not allowed in the formation rules. Also, we only allow bounded quantifiers (i.e., expressions `$\forall x$' and `$\exists x$' have to be followed by `$\in t$'). The reasons to consider this particular class of formulas are:

\begin{theorem}[Transfer Principle]\label{thm_up-down-transfer}\cite{HVNSPrinciples}
Let $P(a_1,\dots,a_m)$ be a transferrable formula, in which the constants $a_j$ are transferrable objects. Then
\[P(a_1, \dots, a_m)\text{ is true} \quad\text{iff}\quad P(\ster a_1, \dots, \ster a_m) \text{ is true}.\]
\end{theorem}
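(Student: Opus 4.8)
The plan is to deduce the equivalence from a stronger, \emph{family-wise} statement, proved by induction on the build-up of terms and formulas: a {\L}o\'s-type theorem adapted to the reduced-power construction of Section~2. Precisely, I would prove that for every transferrable formula $P(x_1,\dots,x_m)$ and all families $(c_{j,\eps})_{\eps\in I}$ ($j=1,\dots,m$) of transferrable objects whose types match the occurrences of the $x_j$ (so that $P(c_{1,\eps},\dots,c_{m,\eps})$ is a well-formed assertion for small $\eps$),
\[
P([c_{1,\eps}],\dots,[c_{m,\eps}])\text{ is true}\quad\iff\quad P(c_{1,\eps},\dots,c_{m,\eps})\text{ is true for small }\eps.
\]
Granting this, the theorem is the special case of constant families $c_{j,\eps}:=a_j$: then $[c_{j,\eps}]=\ster a_j$ by the very definitions of $\ster a$, $\ster A$, $\ster f$ and $\ster R$ from Section~2, and ``$P(a_1,\dots,a_m)$ for small $\eps$'' is simply ``$P(a_1,\dots,a_m)$'', there being no $\eps$-dependence.

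First I would settle terms: a routine induction on term formation (tupling and $f(t)$) shows that evaluating a term at internal arguments $[c_{1,\eps}],\dots,[c_{m,\eps}]$ returns $[t(c_{1,\eps},\dots,c_{m,\eps})]$, which is exactly the definition of internal tuples and of internal maps $[f_\eps]([x_\eps])=[f_\eps(x_\eps)]$ (well-definedness being insensitive to changing a representative on a set not accumulating at $0$). The base case F1 then unwinds the definitions: $[u_\eps]=[v_\eps]\iff u_\eps=v_\eps$ for small $\eps$ is the definition of $\sim$ for points, and for internal sets (resp.\ internal maps) it follows from that by a short argument — if $U_\eps\ne V_\eps$ for $\eps$ in a set accumulating at $0$, then one of the two inclusions fails on a subset still accumulating at $0$, and choosing a witness there (and an arbitrary element of the nonempty $U_\eps$ elsewhere) produces an internal point of $[U_\eps]\setminus[V_\eps]$. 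Likewise $[u_\eps]\in[X_\eps]\iff u_\eps\in X_\eps$ for small $\eps$, and $[u_\eps]\,\ster R\,[v_\eps]\iff u_\eps\,R_\eps\,v_\eps$ for small $\eps$, are immediate from the definitions of internal sets and internal relations.

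For the inductive step, F2 is immediate because finitely many tails intersect in a tail. For F3 the bounding term evaluates to an internal set $[T_\eps]$ (with all $T_\eps\ne\emptyset$): an internal witness $[b_\eps]\in[T_\eps]$ of $P$ gives $b_\eps\in T_\eps$ and, by the induction hypothesis, $P(b_\eps,\dots)$ for small $\eps$; conversely, $\eps$-wise witnesses can be chosen for the small $\eps$ and completed arbitrarily inside the nonempty $T_\eps$ otherwise. For F4, ``$\Leftarrow$'' is again immediate from the induction hypothesis, and for ``$\Rightarrow$'' one argues by contradiction: if $(\forall x\in T_\eps)P(x,\dots)$ fails for $\eps$ in a set accumulating at $0$, pick there a $b_\eps\in T_\eps$ with $\lnot P(b_\eps,\dots)$ (filling in arbitrarily from the nonempty $T_\eps$), obtain $P([b_\eps],\dots)$ from the hypothesis, hence $P(b_\eps,\dots)$ for small $\eps$ by the induction hypothesis — a contradiction.

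The delicate case, and where I expect the real work to lie, is F5; it also explains why $\lnot$ and $\lor$ (equivalently, an unrestricted $\implies$) are banned: ``for small $\eps$'' corresponds to a filter on $I$ that is not an ultrafilter, and a bare $(\forall x\in t)(P\implies Q)$ need not transfer downward. The guard $(\exists x\in t)P$ is exactly what repairs this. If the internal form of $[(\exists x\in t)P]\land[(\forall x\in t)(P\implies Q)]$ holds, then by the F3-case $(\exists x\in T_\eps)P(x,\dots)$ holds for all small $\eps$, so the sets $T'_\eps:=\{y\in T_\eps:P(y,\dots)\}$ are nonempty for small $\eps$; hence $[T'_\eps]$ is internal, and using the induction hypothesis for $P$ one checks $[T'_\eps]=\{y\in[T_\eps]:P(y,\dots)\}$, so the internal $(\forall x\in[T_\eps])(P\implies Q)$ is literally $(\forall x\in[T'_\eps])Q$, which by the F4-case and the induction hypothesis for $Q$ transfers to ``$(\forall x\in T'_\eps)Q(x,\dots)$ for small $\eps$'', i.e.\ to ``$(\forall x\in T_\eps)(P(x,\dots)\implies Q(x,\dots))$ for small $\eps$''; together with the guard this is the ``for small $\eps$'' form of the whole F5 formula. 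The converse is direct: the guard transfers up by the F3-case, and for any internal $[b_\eps]\in[T_\eps]$ with $P([b_\eps],\dots)$ one gets $P(b_\eps,\dots)$ for small $\eps$ (induction hypothesis), hence $Q(b_\eps,\dots)$ for small $\eps$, hence $Q([b_\eps],\dots)$. This closes the induction and proves the theorem; the bookkeeping in F5 — recognising $[T'_\eps]$ as the correct internal set, and the repeated but essential appeals to nonemptiness of the index sets — is the main obstacle, everything else being a careful unwinding of the definitions of Section~2.
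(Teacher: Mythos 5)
Note that the paper itself contains no proof of Theorem \ref{thm_up-down-transfer}: it is quoted from \cite{HVNSPrinciples} (the remark after it points out these are Palyutin's h-formulas, for which transfer is classical), so there is no in-paper argument to compare against. Your proof --- a {\L}o\'s-type induction on the build-up of transferrable formulas over the reduced power, with the atomic cases settled by choosing $\eps$-wise witnesses on sets accumulating at $0$, choice of witnesses for the bounded quantifiers, and the guard $(\exists x\in t)P$ used exactly to make the implication case transfer downward --- is correct and is essentially the standard argument given in that reference, so nothing further is needed.
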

\begin{remark}
The transferrable formulas that we just defined are called h-formulas in \cite{Palyutin} and Palyutin formulas in \cite{Palmgren}. Already \cite{Palyutin} mentions the transfer principle for these formulas.
\end{remark}

\begin{theorem}[Internal Definition Principle]\label{thm_IDP}\cite{HVNSPrinciples}
Let $A$ be an internal set. Let $P(x)$ be an internal formula. If $\{x\in A: P(x)\}\ne\emptyset$, then $\{x\in A: P(x)\}$ is internal.
\end{theorem}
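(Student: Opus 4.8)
The plan is to realize $B:=\{x\in A:P(x)\}$ explicitly as $[B_\eps]$ for a suitable family of nonempty sets. Write $A=[A_\eps]$ with $\emptyset\ne A_\eps\subseteq\R^d$, and let $c_1=[c_{1,\eps}],\dots,c_m=[c_{m,\eps}]$ be the internal constants occurring in $P$, so that $P(x)=Q(x,c_1,\dots,c_m)$ for a pure transferrable formula $Q$. For each $\eps$ write $Q_\eps(x)$ for the classical reading of $Q$ in which every $c_j$ is replaced by its component $c_{j,\eps}$, and put $B_\eps:=\{x\in A_\eps:Q_\eps(x)\text{ holds}\}$, replacing $B_\eps$ by $\{0\}$ at those $\eps$ (if any) where it is empty --- a change affecting neither $[B_\eps]$ nor the requirement $B_\eps\ne\emptyset$, since both concern only small $\eps$. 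One may equivalently package $[B_\eps]$ as $\ster H(A,c_1,\dots,c_m)$, where $H$ is the standard ``$\eps$-wise set-builder'' map $(S,z_1,\dots,z_m)\mapsto\{x\in S:Q(x,z_1,\dots,z_m)\}$ (or $\{0\}$ when that set is empty); this already exhibits the internality of $[B_\eps]$, since $\ster H$ carries internal arguments to a value in $\ster\Powerset(\R^d)$. What remains is to check that $[B_\eps]=B$.

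That identification reduces to a pointwise characterisation of truth for internal formulas: for every $y=[y_\eps]\in A$,
\[
P(y)\text{ holds}\quad\Longleftrightarrow\quad Q_\eps(y_\eps)\text{ holds for small }\eps.
\]
Granting it, both inclusions are routine. If $x_\eps\in B_\eps$ for small $\eps$, then $[x_\eps]\in A$ and, by the equivalence, $P([x_\eps])$ holds, so $[x_\eps]\in B$. Conversely, if $y=[y_\eps]\in B$ then $y\in A$, so we may take $y_\eps\in A_\eps$ for small $\eps$, and $P(y)$ yields $Q_\eps(y_\eps)$, hence $y_\eps\in B_\eps$, for small $\eps$; thus $y\in[B_\eps]$. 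Applying the second line to a fixed element of $B$, which exists by hypothesis, also gives $B_\eps\ne\emptyset$ for small $\eps$, so $[B_\eps]$ is a bona fide internal set. Hence $B=[B_\eps]$ is internal.

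It remains to prove the pointwise characterisation, which I would establish by induction on the structure of $Q$, stated --- so that the induction goes through --- for transferrable formulas with several free variables all of whose constants are internal. The atomic case [F1] and the conjunction case [F2] are immediate from the definitions of the canonical extensions of $=$, $\in$, and of internal relations and maps; it is exactly here that one uses the exclusion of $\lnot$ and $\lor$ from the formula class, since e.g.\ ``$\lnot(a_\eps\mathrel R b_\eps)$ for small $\eps$'' is strictly stronger than $\lnot([a_\eps]\mathrel{\ster R}[b_\eps])$. The existential case [F3] is easy: an internal witness in an internal set $[W_\eps]$ produces componentwise witnesses $w_\eps\in W_\eps$ for small $\eps$, and conversely componentwise witnesses assemble (choosing $w_\eps\in W_\eps$ arbitrarily at the remaining $\eps$) into a single internal witness. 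The delicate step --- and the one I expect to be the real obstacle --- is the universal case [F4] (hence also the compound rule [F5]): one must exclude the possibility that $Q$ holds at every internal point of $[W_\eps]$ while $Q_\eps$ fails at some $w_\eps\in W_\eps$ along a merely cofinal set of $\eps$. The way out is to glue such failure points, again filling in arbitrary values of $W_\eps$ where $Q_\eps$ does hold, into one internal $[w_\eps]\in[W_\eps]$; universal validity forces $Q([w_\eps])$, and the inductive hypothesis then yields $Q_\eps(w_\eps)$ for \emph{all} sufficiently small $\eps$ --- a tail statement --- contradicting cofinality of the failure set. (The side condition built into [F5] guarantees $W_\eps\ne\emptyset$ for small $\eps$, so the gluing is available.) This cofinal-versus-tail dichotomy, resolved by the tail conclusion of the inductive hypothesis, is the one point where genuine care is needed.
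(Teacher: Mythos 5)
The paper does not actually prove this theorem---it is quoted from \cite{HVNSPrinciples}---so there is no in-paper proof to compare with; judged on its own terms, your strategy (realize $\{x\in A:P(x)\}$ as $[B_\eps]$ with $B_\eps=\{x\in A_\eps: P_\eps(x)\}$, and reduce everything to a \L o\'s-type pointwise characterisation of truth for the restricted fragment, proved by induction on the structure of the formula with internal constants and internal values of the free variables) is exactly the standard route for this reduced-power-over-the-tail-filter setting, and the atomic, conjunction, existential and plain universal cases are handled correctly, including the key cofinal-versus-tail argument in [F4] and the correct use of the nonemptiness hypothesis to get $B_\eps\ne\emptyset$ for small $\eps$.

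There is, however, one concrete gap, precisely at the spot you flag as delicate: the [F5] case. Your parenthetical asserts that the side condition $(\exists x\in t)P$ is needed only to guarantee $W_\eps\ne\emptyset$, so that one may fill in \emph{arbitrary} elements of $W_\eps$ at the remaining $\eps$. That does not suffice. The subformula $P\implies Q$ is not itself a transferrable formula (implication exists only inside the [F5] construct), so the inductive hypothesis cannot be applied to it; to exploit the internal statement $(\forall x\in t)(P\implies Q)$ at the glued point $w=[w_\eps]$ you must first know that $P(w)$ holds internally, and by the inductive hypothesis for $P$ this requires $P_\eps(w_\eps)$ on a whole tail, not merely on the cofinal set where you placed failure witnesses. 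With arbitrary fill-in this may fail, and then the internal universal statement says nothing about $w$. The repair uses the side condition in an essential way: first pass the existential conjunct down componentwise (exactly as in your [F3] case), obtaining for small $\eps$ points $p_\eps\in W_\eps$ with $P_\eps(p_\eps)$; then glue the cofinal witnesses of $P_\eps\,\&\,\lnot Q_\eps$ with these $p_\eps$ at the other small $\eps$. At the resulting internal point, $P$ holds componentwise on a tail, hence internally; the internal implication yields $Q$ internally, hence $Q_\eps$ on a tail, contradicting cofinality of the failure set. With this correction (and the routine observation that term evaluation at internal points is componentwise, so the atomic case is as you state), your argument is complete and matches the intended proof in \cite{HVNSPrinciples}.
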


\begin{definition}
We call $x\in\ster\R$ infinitesimal if $\abs x\le 1/n$ for each $n\in\N$. Notation: $x\approx 0$.\\
We call $x\in\ster\R$ infinitely large if $\abs{x}\ge n$ for each $n\in\N$. Notation: $x\in\ster\R_\infty$. We also write $A_\infty:= A\cap \ster\R_\infty$ for $A\subseteq\ster\R$.
\end{definition}
Notice that $\N$, $\R$, $\ster\R_\infty$ and $\approx$ are external (i.e., not internal), and therefore not allowed in internal formulas.

\begin{theorem}[Spilling principles]\cite{HVNSPrinciples}
Let $A\subseteq\ster\N$ be internal.
\begin{enumerate}
\item (Overspill) If $\N\subseteq A$, then there exists $\omega\in\ster\N_\infty$ such that $\{n\in\ster\N: n\le \omega\}\subseteq A$.
\item (Underspill) If $\ster\N_\infty\subseteq A$, then $A\cap \N\ne\emptyset$.
\end{enumerate}
\end{theorem}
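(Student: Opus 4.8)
The plan is to prove both statements directly from the $\eps$-wise description of internal objects; the only genuine subtlety is the interplay between conditions holding for small $\eps$ and conditions holding for \emph{arbitrarily} small $\eps$ (i.e.\ along some sequence $\eps\to 0$). Writing an internal $A\subseteq\ster\N$ as $A=[A_\eps]$ with $\emptyset\ne A_\eps\subseteq\N$ for small $\eps$, and $x=[x_\eps]\in\ster\N$, one checks from the definitions that $x\in A$ iff $x_\eps\in A_\eps$ for small $\eps$, hence $x\notin A$ iff $x_\eps\notin A_\eps$ for arbitrarily small $\eps$; that $[x_\eps]\le[y_\eps]$ iff $x_\eps\le y_\eps$ for small $\eps$; and that $[z_\eps]\in\ster\N_\infty$ iff for every $M\in\N$ one has $z_\eps\ge M$ for small $\eps$. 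I shall use these equivalences throughout.

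For overspill, let $\N\subseteq A=[A_\eps]$. Since each of $0,1,\dots,M$ lies in $A$, intersecting finitely many smallness thresholds gives $\{0,1,\dots,M\}\subseteq A_\eps$ for all sufficiently small $\eps$; in particular $0\in A_\eps$ for small $\eps$. For such $\eps$ I set
\[\omega_\eps:=\max\bigl\{\,n\le\lfloor 1/\eps\rfloor:\{0,1,\dots,n\}\subseteq A_\eps\,\bigr\}\in\N\]
(the defining set is nonempty since $n=0$ qualifies), and $\omega_\eps:=0$ otherwise, and put $\omega:=[\omega_\eps]\in\ster\N$. The cut-off by $\lfloor 1/\eps\rfloor$ is needed only in the degenerate case $A_\eps=\N$, merely to keep $\omega_\eps$ finite. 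Then $\omega\in\ster\N_\infty$, because for each $M$ the number $M$ is eligible in the defining set for all small $\eps$, so $\omega_\eps\ge M$ for small $\eps$. And $\{n\in\ster\N:n\le\omega\}\subseteq A$: if $[y_\eps]\le\omega$ then for small $\eps$ we have $y_\eps\le\omega_\eps$ and $\{0,\dots,\omega_\eps\}\subseteq A_\eps$, hence $y_\eps\in A_\eps$, so $[y_\eps]\in A$. The point to stress is that $\omega_\eps$ must be chosen so that an \emph{entire} initial segment $\{0,\dots,\omega_\eps\}$, not merely the value $\omega_\eps$, sits inside $A_\eps$, exactly so that every $[y_\eps]\le\omega$ lands in $A$ no matter how oscillatory the representative $y_\eps$ is; this is precisely what upgrades ``$A$ contains an infinitely large element'' to the overspill conclusion.

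For underspill I argue by contradiction. Assume $\ster\N_\infty\subseteq A=[A_\eps]$ but $\ster k\notin A$ for every $k\in\N$; by the above, for each $k$ the set $\{\eps:k\notin A_\eps\}$ contains points arbitrarily close to $0$. Choose inductively a strictly decreasing sequence $\eps_1>\eps_2>\cdots\to 0$ with $\eps_k\le 1/k$ and $k\notin A_{\eps_k}$. Define $x_\eps:=\max\{k\ge 1:\eps_k\ge\eps\}$ for $\eps\le\eps_1$ (a finite maximum since $\eps_k\to 0$) and $x_\eps:=0$ otherwise, and set $x:=[x_\eps]\in\ster\N$. For every $M$ and every $\eps\le\eps_M$ one has $x_\eps\ge M$, so $x\in\ster\N_\infty$; on the other hand $x_{\eps_k}=k$ by strict monotonicity of $(\eps_k)$, so $x_{\eps_k}\notin A_{\eps_k}$ for all $k$, whence $x\notin A$. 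This contradicts $\ster\N_\infty\subseteq A$, so $A\cap\N\ne\emptyset$.

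I expect the only real obstacle to be bookkeeping: keeping straight which conditions are eventual in $\eps$ and which are only cofinal, and, in the overspill part, realizing that the representative $\omega_\eps$ must trap a full initial segment inside $A_\eps$. One could instead try to deduce underspill from overspill via the Internal Definition Principle applied to a set like $\{n\in\ster\N:(\forall m\in\ster\N)(m\le n\implies m\in A)\}$, but expressing the complement of $A$ or a ``frequently not in $A$'' condition as an internal formula is blocked by the absence of negation in transferrable formulas, so the direct $\eps$-wise constructions above seem cleanest.
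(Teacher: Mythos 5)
Your proof is correct. Note that the paper itself does not prove this theorem: it is quoted from \cite{HVNSPrinciples}, so there is no in-paper argument to diverge from, and your direct, self-contained verification in the $\eps$-wise model is the natural route. Both constructions work: for overspill, the key point you rightly stress is that $\omega_\eps$ must trap a whole initial segment $\{0,\dots,\omega_\eps\}$ inside $A_\eps$ (so that arbitrary, possibly oscillating representatives $y_\eps\le\omega_\eps$ still land in $A_\eps$), and the cut-off $\lfloor 1/\eps\rfloor$ correctly handles the degenerate case $A_\eps=\N$; for underspill, the diagonal choice $\eps_k\to 0$ with $k\notin A_{\eps_k}$ and $x_{\eps_k}=k$ indeed produces an infinitely large element outside $A$, giving the contradiction. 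Two small points you assert without justification deserve a line each: (i) an internal $A\subseteq\ster\N$ can be written as $[A_\eps]$ with $A_\eps\subseteq\N$ for small $\eps$ (otherwise $A_\eps\not\subseteq\N$ for arbitrarily small $\eps$, and choosing points of $A_\eps\setminus\N$ at those $\eps$ and arbitrary points of $A_\eps$ elsewhere yields an element of $A$ outside $\ster\N$); and (ii) the criterion ``$x\in A$ iff $x_\eps\in A_\eps$ for small $\eps$'' is independent of the representative $(x_\eps)$ (immediate, since representatives agree for small $\eps$), which is what legitimizes your reading of $x\notin A$ as ``$x_\eps\notin A_\eps$ for arbitrarily small $\eps$''. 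With these remarks the argument is complete, and it is essentially the standard proof of the spilling principles in this Schmieden--Laugwitz-type setting; your closing observation that underspill cannot be cheaply deduced from overspill via the Internal Definition Principle, because negation is not available in internal formulas, is also apt.
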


We will also use the following convenient version of the Saturation principle \cite{HVNSPrinciples}:
\begin{theorem}[Quantifier switching]\cite{HVNSPrinciples}
Let $A$ be an internal set. For each $n\in\N$, let $P_n(x)$, $Q_n(x)$ be internal formulas. If $P_n$ gets stronger as $n$ increases (i.e., for each $n\in\N$ and $x\in A$, $P_{n+1}(x)\implies P_n(x)$) and if
\[(\forall n,m\in\N) (\exists x\in A) (P_n(x) \,\& \, \neg Q_m(x)),
\]
then also
\[
(\exists x\in A) (\forall n\in\N) (P_n(x) \,\& \, \neg Q_n(x)).
\]
\end{theorem}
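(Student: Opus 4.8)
This is a saturation-type statement, so the plan is to descend to the families indexed by $\eps\in I$ and build the required witness by a diagonal construction. Note first that the Internal Definition Principle (Theorem~\ref{thm_IDP}) is of no direct use here: $\{x\in A:P_n(x)\text{ and }\lnot Q_n(x)\}$ is not of the form $\{x\in A:\psi(x)\}$ for an internal formula $\psi$, and in general it is not even internal (complements of internal sets typically are not). Instead I would use the elementary translation, immediate from the definitions of Section~2 (cf.\ \cite{HVNSPrinciples}): for an internal formula $\varphi(x)$, writing $\varphi_\eps$ for the classical predicate obtained by replacing each internal constant $[c_{j,\eps}]$ by $c_{j,\eps}$, one has that $\varphi([x_\eps])$ is true iff $\varphi_\eps(x_\eps)$ holds for small $\eps$; equivalently, $\varphi([x_\eps])$ is \emph{false} iff $\lnot\varphi_\eps(x_\eps)$ holds for arbitrarily small $\eps$. (The only non-obvious case, subformulas built by rule~[F5], is handled via $\{x\in A:P(x)\}=[\{t\in A_\eps:P_\eps(t)\}]$ together with $B\subseteq C\iff B_\eps\subseteq C_\eps$ for small $\eps$.) Write $A=[A_\eps]$ and let $P_{n,\eps},Q_{n,\eps}$ be the predicates on $A_\eps$ attached to $P_n,Q_n$. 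By this translation, the monotonicity hypothesis gives, for each $n$, a bound $\sigma_n>0$ with $P_{n+1,\eps}\implies P_{n,\eps}$ on $A_\eps$ for $\eps\le\sigma_n$; and for each pair $(n,m)$ the hypothesis gives a family $(y^{n,m}_\eps)_\eps$ and a bound $\tau_{n,m}$ with $y^{n,m}_\eps\in A_\eps$ and $P_{n,\eps}(y^{n,m}_\eps)$ for $\eps\le\tau_{n,m}$, while $\lnot Q_{m,\eps}(y^{n,m}_\eps)$ for arbitrarily small $\eps$.

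The heart of the argument is then to glue these into a single family $(x_\eps)_\eps$ for which, for every $m$, $P_{m,\eps}(x_\eps)$ holds for small $\eps$ while $\lnot Q_{m,\eps}(x_\eps)$ holds for arbitrarily small $\eps$; by the translation, $[x_\eps]$ realizes the conclusion. I would use only the witnesses $y^{k,m}$ with $m\le k$. Fix an enumeration $(k_i,m_i)_{i\ge1}$ of $\{(k,m):m\le k\}$ with $k_i$ nondecreasing, $k_i\to\infty$, and each value of $m$ occurring for infinitely many $i$ (say $(1,1);(2,1),(2,2);(3,1),(3,2),(3,3);\dots$). Choose recursively a strictly decreasing sequence $(\eps_i)$ with $\eps_i\to0$, $\eps_i\le\tau_{k_i,m_i}$, $\eps_i\le\sigma_j$ for all $j<k_i$, and $\lnot Q_{m_i,\eps_i}(y^{k_i,m_i}_{\eps_i})$ (possible, as the last condition holds for arbitrarily small $\eps$). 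Set $x_{\eps_i}:=y^{k_i,m_i}_{\eps_i}$; for every other $\eps$, let $k$ be the largest integer with $\eps\le\min(1/k,\tau_{k,k},\sigma_1,\dots,\sigma_{k-1})$ and put $x_\eps:=y^{k,k}_\eps$, and otherwise let $x_\eps$ be arbitrary (this last case only for $\eps$ bounded away from $0$).

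Verification is then routine. Whenever $\eps$ is small, $x_\eps=y^{k,n}_\eps$ for some $n\le k$ with $\eps\le\tau_{k,n}$ and $\eps\le\sigma_j$ for all $j<k$, and $k$ can be made as large as we wish by taking $\eps$ small enough; hence $P_{k,\eps}(x_\eps)$, and the chain $P_{k,\eps}\implies P_{k-1,\eps}\implies\cdots\implies P_{m,\eps}$ gives $P_{m,\eps}(x_\eps)$ for any fixed $m$, so $P_m([x_\eps])$ is true. For fixed $m$, the subsequence of those $\eps_i$ with $m_i=m$ still tends to $0$, and on it $\lnot Q_{m,\eps_i}(x_{\eps_i})$, so $Q_m([x_\eps])$ is false. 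The one genuine obstacle is the conflict between the two demands: $P_m$ must hold for \emph{all} small $\eps$, which forces the first index of the witness used near $0$ to tend to $\infty$; whereas $\lnot Q_m$ must hold for arbitrarily small $\eps$ and for \emph{every} $m$, which forces each $m$ to be revisited arbitrarily close to $0$. Restricting to the pairs $(k,m)$ with $m\le k$ and enumerating them so that $k_i\to\infty$ is exactly what reconciles the two, and it is here — in the downward chain of implications — that the monotonicity of $(P_n)$ is used. (Alternatively, one may simply cite the Saturation principle of \cite{HVNSPrinciples}, of which this is a reformulation.)
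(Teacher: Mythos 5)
The paper itself contains no proof of this theorem: it is quoted from \cite{HVNSPrinciples}. Measured against the standard argument behind it, your plan is the right one: descend to representatives via the \L o\'s-type translation (an internal formula holds at $[x_\eps]$ iff its $\eps$-wise version holds for small $\eps$, hence fails iff the $\eps$-wise version fails for arbitrarily small $\eps$), then diagonalize over an enumeration of the pairs $(k,m)$ with $m\le k$, $k_i\to\infty$ and each $m$ recurring, so that for each fixed $m$ the predicate $P_{m,\eps}$ ends up holding on a full tail of $\eps$ while $\lnot Q_{m,\eps}$ recurs cofinally. The choice of the $\eps_i$, the definition of $x_\eps$ off the sequence, and the final verification are all sound. (A quibble: the translation lemma is not ``immediate from the definitions''---the universal and [F5] clauses require an induction with exactly the kind of fill-in argument described below---but it is the backbone of \cite{HVNSPrinciples}, so invoking it is legitimate.)

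The one step that is not justified as written is the extraction of the bounds $\sigma_n$. You assert that ``by this translation'' the hypothesis ``$P_{n+1}(x)\implies P_n(x)$ for all $x\in A$'' yields $P_{n+1,\eps}\implies P_{n,\eps}$ uniformly on $A_\eps$ for $\eps\le\sigma_n$. But the translation lemma concerns the truth value of a single internal formula at a single internal point, whereas the monotonicity hypothesis is an \emph{external} statement: a material implication between truth values, quantified externally over the internal points of $A$. Such a statement does not by itself descend to a uniform $\eps$-wise implication: if for arbitrarily small $\eps$ in a sparse set there were points of $A_\eps$ satisfying $P_{n+1,\eps}$ but not $P_{n,\eps}$, while for the remaining small $\eps$ no point of $A_\eps$ satisfied $P_{n+1,\eps}$ at all, then no internal point would satisfy $P_{n+1}$, the hypothesis would hold vacuously, and no $\sigma_n$ would exist. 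In the theorem's setting the $\sigma_n$ do exist, but only thanks to the \emph{other} hypothesis, which supplies for all small $\eps$ some point of $A_\eps$ satisfying $P_{n+1,\eps}$; with it one can fill in a family through any putative bad points, apply the internal monotonicity to that family, and reach a contradiction. Simpler still, you can avoid the $\sigma_n$ entirely: apply the monotonicity hypothesis internally to each witness $x^{k,m}=[y^{k,m}_\eps]\in A$, obtaining $P_j(x^{k,m})$ for every $j\le k$, and then choose $\tau_{k,m}$ so small that $y^{k,m}_\eps\in A_\eps$ and $P_{j,\eps}(y^{k,m}_\eps)$ for all $j\le k$ whenever $\eps\le\tau_{k,m}$ (finitely many conditions, each valid on a tail). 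With this modification your construction and verification go through verbatim, without any chain of $\eps$-wise implications.
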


\section{Internal subsets of $\ster\R^d$ and internal functions on $\ster\R^d$}
If $A\subseteq\ster\R^d$, we call the exterior of $A$ (cf.\ also \cite{Giordano-Kunzinger-PEMS})
\[\ext(A):= \{x\in \ster\R^d: x\sterne a, \ \forall a\in A\} = \{x\in \ster\R^d: \abs{x-a}>0,\ \forall a\in A\}.\]
(Recall that $x > 0$ means $x \mathop{\ster >} 0$, i.e.\ $x_\eps > 0$ for small $\eps$.)\\
If $A$ is internal, then $\ext(A) = \{x\in \ster\R^d: x\sternotin A\}$. By transfer on
\[(\forall X\in\Powerset(\R^d)) (X\in\Powerset(\R^d)\setminus\{\R^d\} \iff (\exists y\in\R^d) (y\notin X)\bigr)\]
we find that $A\in\ster(\Powerset(\R^d)\setminus\{\R^d\})$ iff $A\subseteq\ster\R^d$ is internal with $\ext(A)\ne\emptyset$.

Let $\co(A):= \R^d \setminus A$. Considering $\co$: $\Powerset(\R^d)\setminus\{\R^d\}\to\Powerset(\R^d)\setminus\{\R^d\}$, by transfer on
\[(\forall X\in\Powerset(\R^d)\setminus\{\R^d\})(\forall y\in\R^d) (y\in\co (X) \iff y\notin X)\]
we find that $\ext(A)= \sterco(A)$ for each internal $A\subseteq\ster\R^d$ with $\ext(A)\ne\emptyset$.

If $R$ is a ring and $e\in R$ is idempotent (i.e., $e^2=e$), then we denote its complement idempotent $e_c:= 1-e$. We denote $R_\Idem:= \{e\in R: e^2=e\}$.

For any $A\subseteq\ster\R^d$, we denote its interleaved closure (cf.\ also \cite{HVInternal})
\[\interl(A):= \Bigl\{\sum_{j=1}^m a_j e_j: m\in\N, a_j\in A, e_j\in\ster\R_\Idem, \sum_{j=1}^m e_j=1\Bigr\}.\]
Again by transfer, an internal set $A\subseteq\ster\R^d$ is closed under interleaving, i.e.\ $A=\interl(A)$. For internal $A,B\subseteq\ster\R^d$,
\[A\stercup B = \interl(A\cup B) = \{xe+ye_c: x\in A, y\in B, e\in\ster\R_\Idem\}\]
is internal. For internal $A\subseteq\ster\R^d$ with $\ext(A)\ne\emptyset$, we have that $\interl(A\cup\ext(A))=A \stercup \sterco(A)=\ster\R^d$.

Since, by transfer, $\sterco(\{x\in\ster\R: x\le 0\}) = \{x\in\ster\R: x>0\}$, we have in particular (although the order on $\ster\R$ is not total)
\begin{equation}\label{eq-total-order-ersatz}
(\forall x\in\ster\R) (\exists e\in\ster\R_\Idem) (xe\le 0 \land x e_c > 0).
\end{equation}

An internal map $u$: $A\subseteq\ster\R^d\to\ster\R^{d'}$ has interleaved values, i.e.
\[
(\forall x,y\in A)(\forall e\in\ster\R_\Idem) (u(x e + y e_c) = u(x) e + u(y) e_c)
\]
or, equivalently,
\[(\forall x,y\in A)(\forall e\in\ster\R_\Idem) (xe=ye\implies u(x)e=u(y)e).\]

For $A\subseteq E\subseteq\R^d$, we also denote $\ext_E(A): = E\cap \ext(A)$, the exterior of $A$ in $E$.
\begin{lemma}\label{lemma-union-with-exterior}
Let $E\subseteq\ster\R^d$ be internal. Let $A$ be a countable union of internal subsets of $E$ and let $\ext_E(A)\ne \emptyset$. Let $P(x)$ be an internal formula. If $P(x)$ holds for each $x\in A\cup \ext_E(A)$, then $P(x)$ holds for each $x\in E$.
\end{lemma}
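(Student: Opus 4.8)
The plan is to rephrase the conclusion as an equality of internal sets and then argue on representatives. Put $B:=\{x\in E:P(x)\}$. Since $E$ is internal, $P$ is an internal formula, and $B\supseteq\ext_E(A)\ne\emptyset$ (as $P$ holds on $\ext_E(A)$ by hypothesis), the Internal Definition Principle shows that $B$ is internal. Moreover $A_n\subseteq B$ for every $n$, because $A_n\subseteq E$ and $P$ holds on $A_n\subseteq A$. As ``$P$ holds on all of $E$'' means precisely $B=E$, and since trivially $B\subseteq E$, it suffices to prove $E\subseteq B$.

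Fix representatives $E=[E_\eps]$, $B=[B_\eps]$, $A_n=[A_{n,\eps}]$, and one element $y=[y_\eps]\in\ext_E(A)$. I will use two routine facts, each obtained by the standard splicing argument (if the asserted inclusion failed on a set of $\eps$ accumulating at $0$, one glues representatives so as to contradict the given hypothesis): \emph{(a)} whenever $[X_\eps]\subseteq[Y_\eps]$ for internal sets, one has $X_\eps\subseteq Y_\eps$ for all sufficiently small $\eps$; applying this to $A_n\subseteq B$ and to $B\subseteq E$ yields numbers $\eps_n>0$ with $A_{n,\eps}\subseteq B_\eps$ for $\eps\le\eps_n$, as well as $B_\eps\subseteq E_\eps$ for small $\eps$; \emph{(b)} since $y\sterne a$ for every $a$ in $A=\bigcup_n A_n$, for each $n$ there is $\delta_n>0$ with $y_\eps\notin A_{n,\eps}$ for $\eps\le\delta_n$ (otherwise $y_\eps$ would coincide, on $\eps$'s accumulating at $0$, with a suitably glued element of $A_n$). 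Also $y_\eps\in E_\eps$ for small $\eps$.

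Now assume $E\not\subseteq B$, i.e.\ $B\ne E$. Then $G:=\{\eps:B_\eps\ne E_\eps\}$ accumulates at $0$, for otherwise $B_\eps=E_\eps$ for all small $\eps$ would give $B=E$; by \emph{(a)} we may pick $z_\eps\in E_\eps\setminus B_\eps$ for $\eps\in G$. For the remaining small $\eps$ set $z_\eps:=y_\eps$ (and define $z_\eps$ arbitrarily elsewhere), and let $z:=[z_\eps]$. Then $z\in E$. I claim $z\in\ext_E(A)$. Indeed, let $a=[a_\eps]\in A$; choosing $n$ with $a\in A_n$ and a representative with $a_\eps\in A_{n,\eps}$ for small $\eps$, then for all sufficiently small $\eps\le\min(\eps_n,\delta_n)$ one has: if $\eps\in G$, then $z_\eps\in E_\eps\setminus B_\eps$ while $a_\eps\in A_{n,\eps}\subseteq B_\eps$, so $z_\eps\ne a_\eps$; and if $\eps\notin G$, then $z_\eps=y_\eps\notin A_{n,\eps}$, so again $z_\eps\ne a_\eps$. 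Hence $z\sterne a$, proving $z\in E\cap\ext(A)=\ext_E(A)$. By hypothesis $P(z)$ holds, so $z\in B$, whence $z_\eps\in B_\eps$ for all small $\eps$; this contradicts $z_\eps\in E_\eps\setminus B_\eps$ at the arbitrarily small $\eps\in G$. Therefore $E\subseteq B$, i.e.\ $P$ holds throughout $E$.

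The essential point is fact \emph{(b)}: the hypothesis $\ext_E(A)\ne\emptyset$ supplies a \emph{single} representative $y=[y_\eps]$ which, for every $n$ separately (with an $n$-dependent threshold), eventually avoids $A_{n,\eps}$; this is exactly where countability of the union $A=\bigcup_n A_n$ is used (for an uncountable union no such $y$ need exist), and it is what lets the ``$\eps\notin G$'' case in the last paragraph be dispatched uniformly in $n$. Everything else is bookkeeping with ``for small $\eps$'' thresholds together with the elementary translations collected in the second paragraph.
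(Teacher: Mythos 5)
Your proof is correct, but it takes a genuinely different route from the paper. The paper stays entirely inside the internal-objects calculus it is advocating: after noting (as you do) that $\{x\in E: P(x)\}$ is internal and hence closed under interleaving, it uses transfer to pass from $x\sternotin A_1,\dots,x\sternotin A_n$ to $x\sternotin A_1\stercup\cdots\stercup A_n$, then Quantifier switching (countable saturation) to produce a single internal $C$ with $A\subseteq C\subseteq E$, $\ext_E(C)\ne\emptyset$ and $P$ true on $C$, and concludes via $\interl(C\cup\ext_E(C))=E$. You instead reduce to showing $B:=\{x\in E:P(x)\}=E$ and argue by hand at the level of representatives, splicing a witness $z$ from points of $E_\eps\setminus B_\eps$ (on the $\eps$'s where $B_\eps\ne E_\eps$) with the representative $y_\eps$ of a point of $\ext_E(A)$, and checking $z\in\ext_E(A)$; your auxiliary facts (a) and (b) are correct and standard. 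What each approach buys: the paper's proof illustrates the principles (transfer, interleaving, saturation) that the paper exists to promote and never touches representatives; yours is more elementary and, notably, never actually uses countability of the union -- since for each individual $a\in A$ you only invoke the threshold for the one $A_n$ containing it, the same argument works for an arbitrary union of internal subsets (equivalently for arbitrary $A\subseteq E$, as singletons are internal), so you have in fact proved a more general statement, at the price of exactly the kind of $\eps$-wise bookkeeping the paper is trying to avoid. One small inaccuracy in your commentary: your closing claim that fact (b) is ``exactly where countability is used'' is not right -- (b) follows for each $n$ separately from $y\in\ext_E(A)$ alone, so countability enters nowhere in your argument (harmless, since it is a hypothesis you are free not to use, but the attribution is misleading); in the paper, by contrast, countability is genuinely needed, precisely for the Quantifier switching step.
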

\begin{proof}
As $\{x\in E: P(x)\}$ is internal, $P(x)$ also holds for each $x\in \interl(A\cup \ext_E(A))$. Let $A=\bigcup_{n\in\N} A_n$ with $A_n\subseteq E$ internal. Let $n\in\N$.
If $x\in \ext_E(A)$, then $x\sternotin A_1$, \dots, $x\sternotin A_n$, and thus, by transfer, $x\sternotin A_1\stercup \cdots\stercup A_n$. Thus
\[(\exists C\in \ster\Powerset(\R^d)) \bigl((\exists x\in E) (x \sternotin C)\land A_1\stercup\cdots\stercup A_n\subseteq C\subseteq E \land (\forall x\in C) P(x)\bigr).\]
By Quantifier switching, we find an internal $C\subseteq E$ with $\ext_E(C)\ne\emptyset$, $A\subseteq C$ and $P(x)$, $\forall x\in C$. Then also $P(x)$, $\forall x\in \interl(C\cup\ext_E(A))\supseteq \interl(C\cup\ext_E(C)) = E$.
\end{proof}

\begin{corollary}\label{cor-zero-on-union}
Let $E\subseteq\ster\R^d$ be internal. Let $(r_n)_{n\in\N}$ be a monotone sequence in $\ster\R$ and $r\in\ster\R$. Let $u$ be an internal map $E\to\ster\R^{d'}$. Let
\[A:= \{x\in E: \abs x\le r_n,\text{ for some } n\in\N\}\quad \text{and}\quad B:=\{x\in E: \abs x> r_n, \forall n\in\N\}\]
or
\[A:=\{x\in E: \abs x\ge r_n, \text{ for some }n\in\N\}\quad \text{and}\quad B:= \{x\in E: \abs x< r_n,\forall n\in\N\}.\]
If $A, B\ne\emptyset$ and $\abs{u}\le r$ on $A\cup B$, then $\abs{u}\le r$ on $E$.
\end{corollary}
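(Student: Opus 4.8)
The plan is to deduce the corollary directly from Lemma~\ref{lemma-union-with-exterior}, taking for $P(x)$ the formula $\abs{u(x)}\le r$. This is a transferrable formula --- built from the variable $x$, the (extension of the) Euclidean norm, the internal function $u$, the internal constant $r$ and the relation $\le$ --- and all of its constants are internal, so $P$ is internal. It thus suffices to verify the hypotheses of the lemma: that $A$ is a countable union of internal subsets of $E$, that $\ext_E(A)=B$ (so in particular $\ext_E(A)\ne\emptyset$), and that $P$ holds on $A\cup\ext_E(A)$.

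First I would write $A$ as a countable union of internal sets. In the first case put $A_n:=\{x\in E:\abs x\le r_n\}$; since $E$ is internal and $\abs x\le r_n$ is an internal formula, the Internal Definition Principle shows each nonempty $A_n$ is internal, and $A=\bigcup_{n\in\N}A_n$ (at least one $A_n$ is nonempty because $A\ne\emptyset$; the empty ones may simply be dropped or repeated). In the second case one uses $A_n:=\{x\in E:\abs x\ge r_n\}$ instead. Next I would identify the exterior: because each $A_n$ is internal, $\ext(A_n)=\{x\in\ster\R^d: x\sternotin A_n\}$, and an $\eps$-wise check shows that for $x\in E$ the condition $x\sternotin A_n$ amounts to $\abs x>r_n$ (resp.\ $\abs x<r_n$ in the second case). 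Since the exterior of a union is the intersection of the exteriors, $\ext_E(A)=\bigcap_{n\in\N}\ext_E(A_n)$, which equals $\{x\in E:\abs x>r_n,\ \forall n\in\N\}$ (resp.\ $\{x\in E:\abs x<r_n,\ \forall n\in\N\}$), that is, exactly $B$.

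Having $\ext_E(A)=B\ne\emptyset$ and, by hypothesis, $P$ valid on $A\cup B=A\cup\ext_E(A)$, Lemma~\ref{lemma-union-with-exterior} immediately gives $P$ on all of $E$, i.e.\ $\abs u\le r$ on $E$; the two cases are handled verbatim with the two choices of $A_n$. The only step with any real content beyond bookkeeping is the equality $\ext_E(A)=B$, and its two ingredients --- that an internal set's exterior is its star-complement, and that $\ext$ converts the countable union defining $A$ into the countable intersection defining $B$ --- are precisely what makes Lemma~\ref{lemma-union-with-exterior} the natural tool here.
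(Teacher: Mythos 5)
Your proposal is correct and follows essentially the same route as the paper: apply Lemma~\ref{lemma-union-with-exterior} with $P(x):=(\abs{u(x)}\le r)$, write $A=\bigcup_n A_n$ with $A_n$ internal (the paper takes $A_n\cap E$ with $A_n:=\{x\in\ster\R^d:\abs x\le r_n\}$, using monotonicity and $A\ne\emptyset$ to ensure nonemptiness, where you drop the empty ones), and identify $B=\ext_E(A)$ via $\ext(A_n)=\sterco(A_n)$ and the fact that the exterior of a union is the intersection of the exteriors. The differences are purely cosmetic bookkeeping.
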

\begin{proof}
Consider the first case (the second case is similar). Let $A_n:= \{x\in\ster\R^d: \abs x\le r_n\}$ and $B_n:= \{x\in\ster\R^d: \abs x> r_n\}$. As $(r_n)_n$ is monotone and $A\ne\emptyset$, w.l.o.g.\ $A_n\cap E\ne \emptyset$, and thus $A_n\cap E$ are internal. In order to apply the previous lemma with $P(x):=(\abs {u(x)}\le r)$, we show that $B=\ext_E(A)$. By transfer, $\ext(A_n)=\sterco(A_n)= B_n$. The result follows, since $\ext(\bigcup_i X_i) = \bigcap_i \ext(X_i)=\{x\in\ster\R^d: |x-y|>0, \forall i,\forall x\in X_i\}$ for any $X_i\subseteq\ster\R^d$. 
\end{proof}

In practice, we will often use the following convenient version of over- and underspill:
\begin{proposition}[Overspill]
Let $P(m)$ be an internal formula. Then
\[P(m) \text{ holds for sufficiently small }m\in\ster\N_\infty \iff P(m) \text{ holds for sufficiently large }m\in\N\]
i.e., $(\exists M_0\in\ster\N_\infty) (\forall m\in\ster\N_\infty, m\le M_0) P(m)$ $\iff$ $(\exists m_0\in\N) (\forall m\in\N, m\ge m_0) P(m)$.
\end{proposition}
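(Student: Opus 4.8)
The plan is to obtain the two implications from the Spilling principles: Overspill for ``$\Leftarrow$'' and Underspill for ``$\Rightarrow$''. In both cases I first turn $P$ into an internal \emph{set} of natural numbers via the Internal Definition Principle, Theorem~\ref{thm_IDP}, and then spill. The one thing to watch is that the order on $\ster\N$ is not total, so I cannot split $\ster\N$ into $\{m\le M_0\}$ and its complement; I avoid this by feeding $P$ only into order-robust terms (built from $\max$, $\min$ and a set-valued map) that interact well with interleaving, and by never forming $\{m:\lnot P(m)\}$, since negation is not internal.

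For ``$\Leftarrow$'': fix $m_0\in\N$ with $P(m)$ true for all $m\in\N$, $m\ge m_0$, and put $f\colon\N\to\N$, $f(m):=\max(m,m_0)$. Then $\ster f(m)=m$ for each $m\in\ster\N_\infty$ (such $m$ being $\ge m_0$), while $\ster f(n)\in\N$ and $\ster f(n)\ge m_0$ for each $n\in\N$, so that $P(\ster f(n))$ is true for each $n\in\N$ by the choice of $m_0$. Hence $A:=\{m\in\ster\N: P(\ster f(m))\}$ is defined by an internal formula and contains $\N$, so $A$ is internal by Theorem~\ref{thm_IDP}. By the Spilling principle (Overspill) there is $\omega\in\ster\N_\infty$ with $\{m\in\ster\N:m\le\omega\}\subseteq A$; for $m\in\ster\N_\infty$ with $m\le\omega$ this gives $P(\ster f(m))$, hence $P(m)$ since $\ster f(m)=m$. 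So $M_0:=\omega$ works.

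For ``$\Rightarrow$'': fix $M_0\in\ster\N_\infty$ with $P(m)$ true for all $m\in\ster\N_\infty$, $m\le M_0$. Now a single good value per $m$ is not enough, since I must recover $P$ on a whole tail of $\N$, so I use the internal set-valued map $h$ with $h(m):=\{k\in\ster\N:\min(m,M_0)\le k\le M_0\}$, which is a nonempty internal subset of $\ster\N$ (it contains $M_0$) for every $m\in\ster\N$. Then $B:=\{m\in\ster\N:(\forall k\in h(m))\,P(k)\}$ is defined by an internal formula (only a bounded universal quantifier; no negation, no disjunction) and is nonempty, as $M_0\in B$; so $B$ is internal by Theorem~\ref{thm_IDP}. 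For any $m\in\ster\N_\infty$ one has $\min(m,M_0)\in\ster\N_\infty$, hence every $k\in h(m)$ is infinitely large and $\le M_0$ and so satisfies $P(k)$; thus $\ster\N_\infty\subseteq B$. By the Spilling principle (Underspill) there is $m_0\in B\cap\N$; since then $\min(m_0,M_0)=m_0$, the set $h(m_0)=\{k\in\ster\N:m_0\le k\le M_0\}$ contains every $n\in\N$ with $n\ge m_0$, whence $P(n)$ is true for all such $n$. That is exactly ``$P(m)$ holds for sufficiently large $m\in\N$''.

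The purely routine steps are the $\eps$-wise checks that $\ster f(m)=m$ and $\min(m,M_0)\in\ster\N_\infty$ whenever $m\in\ster\N_\infty$, that $\min(m_0,M_0)=m_0$ when $m_0\in\N$, that $h$ is a genuine internal map, and that $A$ and $B$ are defined by (transferrable) internal formulas so that Theorem~\ref{thm_IDP} applies. I expect the only real obstacle to be organising the argument so that no appeal to trichotomy on $\ster\N$ is needed: an element of $\ster\N_\infty$ need be neither $\le M_0$ nor $\ge M_0$, which is precisely why the proof is routed through $f$ and $h$ rather than through a direct case distinction.
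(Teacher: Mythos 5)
Your argument is correct, but it reaches the two spilling principles by a different mechanism than the paper. The paper works directly with $A:=\{m\in\ster\N: P(m)\}$ and copes with the failure of trichotomy on $\ster\N$ via the interleaving calculus of Section 4: using eq.~\eqref{eq-total-order-ersatz} it shows $\ster\N_\infty\subseteq A\stercup\{m\in\ster\N: m\ge M_0\}$ (resp.\ $\N\subseteq A\stercup\{m\in\ster\N: m\le m_0\}$), applies underspill to the internal tail set $B:=\{n\in\ster\N:(\forall m\ge n)\,(m\in A\stercup\{m\ge M_0\})\}$ (resp.\ overspill to the interleaved union itself), and concludes because a finite number cannot lie in the component $\ge M_0$ (resp.\ an infinitely large number cannot lie in the component $\le m_0$), so membership in the interleaved union collapses to membership in $A$. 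You avoid $\stercup$ and eq.~\eqref{eq-total-order-ersatz} entirely by composing $P$ with order-flattening internal objects: the map $\ster f$ with $f(m)=\max(m,m_0)$ in one direction, and the interval-valued internal map $h(m)=\{k\in\ster\N:\min(m,M_0)\le k\le M_0\}$ in the other, so that only the Internal Definition Principle and the raw spilling principles are needed. Your set $B$ plays the same role as the paper's tail set, but caps the bounded quantifier at $M_0$ instead of absorbing the part above $M_0$ into an interleaved union. The price of your route is the syntactic bookkeeping you flag yourself (that substituting the term $f(m)$, or bounding a quantifier by the term $h(m)$, inside the internal formula $P$ again yields an internal formula, and the $\eps$-wise verification that $h$ is an internal map); these checks are indeed routine within the formation rules, so there is no gap. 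What each approach buys: yours is self-contained relative to Sections 2--3 and needs no facts about $\stercup$, while the paper's version exercises the interleaving machinery it has just set up and keeps the defining set $\{m:P(m)\}$ untouched; both are of comparable length.
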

\begin{proof}
Let $A:= \{m\in\ster\N: P(m)\}$.

\item $\Rightarrow$: by assumption and eq.\ \eqref{eq-total-order-ersatz},  $\ster\N_\infty\subseteq A\stercup \{m\in\ster\N: m\ge M_0\}$. Then also
\[\ster\N_\infty\subseteq B:= \bigl\{n\in\ster\N: (\forall m\in\ster\N, m\ge n)(m\in A\stercup \{m\in\ster\N: m\ge M_0\})\bigr\}.\]
By underspill, $B\cap \N\ne\emptyset$. In particular, each sufficiently large $m\in\N$ belongs to $\N\cap (A\stercup \{m\in\ster\N: m\ge M_0\}) = \N\cap A$.

\item $\Leftarrow$: by assumption, $\N\subseteq A \stercup \{m\in\ster\N: m\le m_0\}$. By overspill, each sufficiently small $m\in\ster\N_\infty$ belongs to $\ster\N_\infty\cap (A\stercup \{m\in\ster\N: m\le m_0\})= \ster\N_\infty\cap A$.
\end{proof}

\section{Moderateness and $\Mod^\infty$-regularity}
We denote the infinitesimal $\rho := [\eps] >0$.
We call $x\in\ster\R^d$ \defstyle{moderate} (notation: $x\in\ster\R^d_{\Mod}$) if $|x|\le \rho^{-N}$ for some $N\in\N$, and \defstyle{negligible} (notation: $x\approxeq 0$) if $|x|\le \rho^n$ for each $n\in\N$.\\
In this paper, $\Omega\subseteq\R^d$ will denote an open set. We call $\ster\Omega_c:= \bigcup_{K\csub\Omega} \ster K$. We denote $\mathcal K(\Omega):= \{K\subseteq \Omega: K$ is compact, $K\ne\emptyset\}$.

Let $K\in\ster(\mathcal K(\R^d))$. Let $u$ be a map $\ster\R^d\to\ster\C$. We say that $u$ is supported in $K$ (cf.\ also \cite{Giordano-Kunzinger-PEMS}; notation: $\supp(u)\subseteq K$) if $u=0$ on $\ext (K)$. For $u\in\ster(\Cnt[\infty](\R^d))$, this means (by transfer) that $\ster\supp(u)\subseteq K$ (i.e., $\supp(u_\eps)\subseteq K_\eps$ for small $\eps$). 
We say that $u$ is compactly supported in $A\subseteq\ster\R^d$ if $\supp(u)\subseteq K$ for some $K\in\ster{(\mathcal K(\R^d))}$ with $K\subseteq A$. We define
\begin{align*}
\Mod(A) & := \{u\in\ster(\Cnt[\infty](\R^d)): \partial^\alpha u(x)\in\ster\C_\Mod, \forall \alpha\in\N^d,\forall x\in A\}\\
\Mod^\infty(A)& := \{u\in\ster(\Cnt[\infty](\R^d)): (\forall x\in A) (\exists N\in\N) (\forall \alpha\in\N^d) |\partial^\alpha u(x)|\le \caninf^{-N}\}\\
\Mod_c(A)& := \{u\in\Mod(\ster\R^d): u \text{ is compactly supported in } A\}\\
\Mod^\infty_c(A)& := \Mod^\infty(A)\cap \Mod_c(A).
\end{align*}

\begin{lemma}\label{lemma-internal-subset-of-union}
Let $A=\bigcup_{n\in\N} A_n$ with $A_n\subseteq\ster\R^d$ internal. If $B\subseteq A$ is internal, then $B\subseteq A_n$ for some $n\in\N$.
\end{lemma}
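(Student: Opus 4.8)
The plan is to argue by contradiction: assuming $B\not\subseteq A_n$ for every $n\in\N$, I will produce a single point $x\in B$ lying outside \emph{all} of the $A_n$, which contradicts $B\subseteq A=\bigcup_{n\in\N}A_n$. The diagonalisation over $n\in\N$ is exactly what the Quantifier switching principle packages, so no hands-on argument with representatives $(B_\eps)$, $(A_{n,\eps})$ is needed.

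First I would fix the formal setup. Since each $A_n\subseteq\ster\R^d$ is internal, set-theoretic membership $x\in A_n$ coincides with the $\eps$-wise one, so $Q_n(x):\equiv(x\in A_n)$ is a genuine internal formula in the single free variable $x$; its negation $\lnot Q_n(x)$ is the ordinary $x\notin A_n$ (to be distinguished from $x\sternotin A_n$). Choosing the trivial internal formula $P_n(x):\equiv(x=x)$, which vacuously satisfies $P_{n+1}(x)\implies P_n(x)$, the hypothesis of Quantifier switching for the internal (hence nonempty) set $B$ becomes
\[(\forall n,m\in\N)(\exists x\in B)\bigl(P_n(x)\land\lnot Q_m(x)\bigr) \ \iff\ (\forall m\in\N)(\exists x\in B)(x\notin A_m),\]
and the right-hand side is exactly the assumption that no single $A_m$ contains $B$.

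Applying Quantifier switching then yields an $x\in B$ with $P_n(x)\land\lnot Q_n(x)$ for every $n\in\N$, that is, $x\in B$ with $x\notin A_n$ for all $n$; but $B\subseteq\bigcup_{n\in\N}A_n$ forces $x\in A_n$ for some $n$, a contradiction, so $B\subseteq A_n$ for some $n$. The one point that calls for a little care is the bookkeeping of the middle paragraph — checking that $x\in A_n$ is admissible as an internal formula and that its negation is the plain complement $x\notin A_n$, which is the relation delivered by the conclusion of Quantifier switching — after which the proof is entirely formal, with no analytic content. In particular, there is no need (and it would not even help) to first replace the $A_n$ by an increasing sequence, since that operation changes the union.
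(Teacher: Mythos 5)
Your proof is correct and is essentially the paper's own argument: the paper also assumes $(\forall n\in\N)(\exists x\in B)(x\notin A_n)$ and applies Quantifier switching (implicitly with trivial $P_n$ and $Q_n(x):\equiv(x\in A_n)$, as you spell out) to produce $x\in B$ with $x\notin A$, contradicting $B\subseteq A$. Your explicit bookkeeping of the formulas and of the plain negation $x\notin A_n$ is just a more detailed rendering of the same one-line proof.
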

\begin{proof}
Seeking a contradiction, suppose that $(\forall n\in\N)$ $(\exists x\in B)$ $(x\notin A_n)$. By Quantifier switching, there would exist $x\in B$ such that $x\notin A$.
\end{proof}

\begin{proposition}\label{compactly-supported-in-union}
Let $A=\bigcup_{n\in\N} A_n$ with $A_n\subseteq\ster\R^d$ internal. Let $u$ be a map $\ster\R^d\to\ster\C$. Then $u$ is compactly supported in $A$ iff $u$ is compactly supported in $A_n$ for some $n\in\N$.
\end{proposition}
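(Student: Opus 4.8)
The plan is to observe that the proposition is, after unwinding the definition of compact support, an immediate consequence of Lemma~\ref{lemma-internal-subset-of-union}; the one point that must be noted is that every $K\in\ster(\mathcal K(\R^d))$ is an internal subset of $\ster\R^d$.

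First I would dispose of the implication from right to left, which is trivial: if $u$ is compactly supported in $A_n$ for some $n\in\N$, then there is $K\in\ster(\mathcal K(\R^d))$ with $K\subseteq A_n$ and $\supp(u)\subseteq K$; since $A_n\subseteq A$, the same $K$ shows that $u$ is compactly supported in $A$.

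For the converse, suppose $u$ is compactly supported in $A$. By definition there is $K\in\ster(\mathcal K(\R^d))$ with $K\subseteq A$ and $u=0$ on $\ext(K)$. Writing $K=[K_\eps]$ with $K_\eps\in\mathcal K(\R^d)$ for small $\eps$ shows that $K$ is internal. Since $K\subseteq A=\bigcup_{n\in\N}A_n$ with each $A_n$ internal, Lemma~\ref{lemma-internal-subset-of-union} gives some $n\in\N$ with $K\subseteq A_n$; then $K$ witnesses that $u$ is compactly supported in $A_n$, as required.

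I do not anticipate a genuine obstacle here: the substantive content lies in Lemma~\ref{lemma-internal-subset-of-union}, whose proof uses Quantifier switching, and the reduction to it is purely formal. The only care needed is not to read into the definition of ``compactly supported'' any hidden requirement such as $\ext(K)\ne\emptyset$: the condition $\supp(u)\subseteq K$ means exactly ``$u=0$ on $\ext(K)$'', which is vacuous precisely when $\ext(K)=\emptyset$, and this causes no difficulty for either direction of the argument.
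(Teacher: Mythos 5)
Your proposal is correct and follows the paper's own argument exactly: the paper also reduces the nontrivial direction to Lemma~\ref{lemma-internal-subset-of-union} applied to the internal witness $K\in\ster(\mathcal K(\R^d))$ with $K\subseteq A$, and the converse direction is immediate from $A_n\subseteq A$. Your additional remarks (that $K$ is internal by definition, and that no hidden condition on $\ext(K)$ is needed) are accurate and harmless elaborations of the same proof.
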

\begin{proof}
If $K\in\ster(\mathcal K(\R^d))$ and $K\subseteq A$, then $K\subseteq A_n$ for some $n$ by the previous lemma.
\end{proof}

\begin{corollary}
\[\Mod_c(\ster\Omega_c)= \{u\in\Mod(\ster\R^d): (\exists K\csub\Omega) (\supp(u)\subseteq\ster K)\}.\]
\end{corollary}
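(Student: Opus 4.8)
The plan is to replace the uncountable union defining $\ster\Omega_c$ by a countable one, so that Proposition \ref{compactly-supported-in-union} applies. Since $\Omega\subseteq\R^d$ is open, it admits an exhaustion by compacts: there are $K_1\subseteq K_2\subseteq\cdots$ in $\mathcal K(\Omega)$ with $K_n\subseteq\mathrm{int}(K_{n+1})$ and $\bigcup_{n}K_n=\Omega$ (e.g.\ $K_n=\{x\in\Omega:\abs x\le n,\ \mathrm{dist}(x,\R^d\setminus\Omega)\ge 1/n\}$, discarding empty ones). Every $K\in\mathcal K(\Omega)$ is covered by the open sets $\mathrm{int}(K_{n+1})$, so by compactness and nestedness $K\subseteq K_n$ for some $n$, whence $\ster K\subseteq\ster K_n$. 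Therefore $\ster\Omega_c=\bigcup_{K\csub\Omega}\ster K=\bigcup_{n\in\N}\ster K_n$ is a countable union of internal sets.

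First I would show $\subseteq$. If $u\in\Mod_c(\ster\Omega_c)$, then $u\in\Mod(\ster\R^d)$ and $u$ is compactly supported in $\ster\Omega_c=\bigcup_n\ster K_n$, so by Proposition \ref{compactly-supported-in-union} it is compactly supported in $\ster K_n$ for some $n$: $\supp(u)\subseteq L$ for some $L\in\ster(\mathcal K(\R^d))$ with $L\subseteq\ster K_n$. Since $L\subseteq\ster K_n$ gives $\ext(\ster K_n)\subseteq\ext(L)$, from $u=0$ on $\ext(L)$ we get $u=0$ on $\ext(\ster K_n)$, i.e.\ $\supp(u)\subseteq\ster K_n$ with $K_n\csub\Omega$; thus $u$ lies in the right-hand side.

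Conversely, if $u\in\Mod(\ster\R^d)$ and $\supp(u)\subseteq\ster K$ for some $K\csub\Omega$, then (after replacing $K$ by $\overline K$ if needed) $K\in\mathcal K(\R^d)$, so $\ster K\in\ster(\mathcal K(\R^d))$, and $\ster K\subseteq\ster\Omega_c$ directly from the definition of $\ster\Omega_c$. Hence $\ster K$ witnesses that $u$ is compactly supported in $\ster\Omega_c$, so $u\in\Mod_c(\ster\Omega_c)$.

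The substantive step is the first paragraph: recognising that the defining union over all $K\csub\Omega$ collapses to a countable union of internal sets via a compact exhaustion, which is exactly what lets Proposition \ref{compactly-supported-in-union} fire. After that, both inclusions are just unwinding the definition of ``compactly supported in $A$'', the only care needed being the monotonicity $L\subseteq\ster K_n\implies\ext(\ster K_n)\subseteq\ext(L)$; I do not anticipate any real obstacle.
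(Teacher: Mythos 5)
Your proof is correct and follows exactly the route the paper intends: the corollary is stated as an immediate consequence of Proposition \ref{compactly-supported-in-union}, with the compact exhaustion $\ster\Omega_c=\bigcup_{n\in\N}\ster K_n$ being the (unstated) observation that makes it apply. Your unwinding of both inclusions, including the monotonicity of $\ext$ and the remark that $\ster K$ itself witnesses compact support in $\ster\Omega_c$, is precisely the intended argument.
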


\begin{proposition}\label{M-infty-kar}
Let $A=\bigcup_{n\in\N} A_n$ with $A_n\subseteq\ster\R^d$ internal. Let $u\in\ster(\Cnt[\infty](\R^d))$. Then
\[u\in \Mod^\infty(B)\text{ for some internal }B\supseteq A\ \text{ iff }\ (\exists N\in\N) (\forall \alpha\in\N^d) (\forall x\in A) |\partial^\alpha u(x)|\le \caninf^{-N}.\]
\end{proposition}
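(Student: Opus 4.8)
The plan is to package the quantifier ``$\forall\alpha$'' into a single internal inequality and then invoke Quantifier switching in each direction. For $u\in\ster(\Cnt[\infty](\R^d))$ and $n\in\ster\N$ put $v_n(x):=\ster\sup\{\,|\partial^\alpha u(x)|:\alpha\in\ster\N^d,\ |\alpha|\le n\,\}$; $\eps$-wise this is a finite maximum, so $v$ is an internal function of $(x,n)$, nondecreasing in $n$, with $|\partial^\alpha u(x)|\le v_{|\alpha|}(x)$ for all $\alpha$. Hence ``$u$ is $\Mod^\infty$ at $x$'' is equivalent to $(\exists N\in\N)(\forall n\in\N)\,v_n(x)\le\caninf^{-N}$, while the right-hand side of the proposition reads $(\exists N\in\N)(\forall n\in\N)(\forall x\in A)\,v_n(x)\le\caninf^{-N}$. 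In both directions the real task is therefore to move ``$\exists N$'' across the \emph{external} quantifier ``$\forall x\in A$''.

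For ``$\Rightarrow$'', suppose $u\in\Mod^\infty(B)$ with $B\supseteq A$ internal, and assume the right-hand side fails. Then for each $N\in\N$ there are $\alpha_N\in\N^d$ and $x_N\in A\subseteq B$ with $\lnot\bigl(|\partial^{\alpha_N}u(x_N)|\le\caninf^{-N}\bigr)$, hence $\lnot\bigl(v_{k_N}(x_N)\le\caninf^{-N}\bigr)$ for $k_N:=\max(|\alpha_1|,\dots,|\alpha_N|)$. Applying Quantifier switching over the internal set $B$, with $Q_N(x)$ the internal formula $v_{k_N}(x)\le\caninf^{-N}$ and $P_N$ trivially true, produces $x^*\in B$ with $\lnot\bigl(v_{k_N}(x^*)\le\caninf^{-N}\bigr)$ for every $N\in\N$. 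But $x^*\in B$, so $u$ is $\Mod^\infty$ at $x^*$: there is $N_0\in\N$ with $v_n(x^*)\le\caninf^{-N_0}$ for all $n\in\N$, in particular for $n=k_{N_0}$ --- a contradiction. The key point is that Quantifier switching runs over the \emph{internal} set $B$, so it is enough that the point $x^*$ it produces lie in $B$, not in the external set $A$.

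For ``$\Leftarrow$'', fix $N\in\N$ with $v_n(x)\le\caninf^{-N}$ for all $n\in\N$ and $x\in A$. Write $A=\bigcup_m A_m$ with $A_m$ internal and set $C_m^{\mathrm{low}}:=A_1\stercup\cdots\stercup A_m$ (internal, increasing in $m$). Over the internal set $\ster\Powerset(\R^d)$, let $P_n(Y)$ be the internal formula $C_n^{\mathrm{low}}\subseteq Y\ \land\ (\forall z\in Y)\,v_n(z)\le\caninf^{-N}$. Both conjuncts are negation-free and get \emph{stronger} as $n$ grows (since $C_n^{\mathrm{low}}$ and $v_n$ increase), and $P_n$ holds for $Y:=\{z\in\ster\R^d:v_n(z)\le\caninf^{-N}\}$, which is internal by the Internal Definition Principle (it is nonempty, containing $A$ by the choice of $N$) and, being closed under interleaving, contains $C_n^{\mathrm{low}}$. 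Hence Quantifier switching --- applied with a trivially false $Q_n$, so that its hypothesis reduces to the solvability of each $P_n$ --- yields an internal $C$ with $P_n(C)$ for all $n\in\N$. Then $C\supseteq\bigcup_m C_m^{\mathrm{low}}\supseteq A$ and $|\partial^\alpha u(z)|\le v_{|\alpha|}(z)\le\caninf^{-N}$ for every $z\in C$ and $\alpha\in\N^d$, i.e.\ $u\in\Mod^\infty(C)$.

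I expect the uniformization in ``$\Rightarrow$'' to be the crux --- extracting one $N$ valid on all of $A$ from the merely pointwise $\Mod^\infty$-regularity on $B$ --- and the argument circumvents it by manufacturing a point of $B$ at which $\Mod^\infty$-regularity fails. A second, more technical, obstacle surfaces in ``$\Leftarrow$'': the upper constraint ``$C\subseteq\{z:v_n(z)\le\caninf^{-N}\}$'' cannot occupy the $\neg Q_n$-slot of Quantifier switching (which must negate a negation-free formula), so it is folded --- together with the lower constraint $C_n^{\mathrm{low}}\subseteq C$ --- into the single monotone formula $P_n$, after which $Q_n$ may be taken trivial. Throughout one keeps the non-total order on $\ster\R$ honest by writing $\lnot(a\le b)$ rather than a starred $\not\le$; no $\eps$-wise argument is needed.
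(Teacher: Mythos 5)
Your proof is correct and follows essentially the same route as the paper: both directions rest on Quantifier switching --- over $\ster\Powerset(\R^d)$ to build the internal superset in ``$\Leftarrow$'' (with formulas getting stronger in $n$, exactly as in the paper), and over the internal set $B$ to extract the uniform $N$ in ``$\Rightarrow$'' (the argument the paper delegates to a citation). The only cosmetic difference is that you package the bounded quantifier over $\alpha$ into the internal function $v_n$, where the paper instead uses that $\{\alpha\in\ster\N^d:\abs\alpha\le n\}$ is the interleaved closure of a finite set.
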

\begin{proof}
$\Rightarrow$: by Quantifier switching, as in \cite[Prop.\ 7.6]{HV-NSPrinciples-tutorial}.

\item $\Leftarrow$: for each $n\in\N$,
\[(\exists B\in\ster\Powerset(\R^d)) \bigl(A_1\stercup\cdots\stercup A_n\subseteq B \land (\forall x\in B)(\forall \alpha\in\ster\N^d, \abs\alpha\le n) |\partial^\alpha u(x)|\le\caninf^{-N}\bigr)\]
since $\{(\alpha,x)\in\ster\N^d\times\ster\R^d: \abs{\partial^\alpha u(x)}\le \caninf^{-N}\}$ is closed under interleaving and $\{\alpha\in\ster\N^d: \abs\alpha\le n\}$ is, by transfer, the interleaved closure of the finite set $\{\alpha\in\N^d: \abs\alpha\le n\}$. The result then follows by Quantifier switching.
\end{proof}
\begin{corollary}
\[\Mod^\infty(\ster\Omega_c)=\{u\in\ster(\Cnt[\infty](\R^d)): (\forall K\csub\Omega) (\exists N\in\N) (\forall \alpha\in\N^d) (\forall x\in \ster K) |\partial^\alpha u(x)|\le \caninf^{-N}\}.\]
\end{corollary}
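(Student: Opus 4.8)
The plan is to localize the statement to single compact sets $K\csub\Omega$ and to feed Proposition~\ref{M-infty-kar} into each such $K$. Write $R$ for the set on the right-hand side of the asserted identity. The point is that for a fixed $K\csub\Omega$ the set $\ster K=[K]$ is \emph{internal}, so Proposition~\ref{M-infty-kar} applies to it with the constant decomposition $A_n:=\ster K$. Reading that proposition with $A:=\ster K$, the condition ``$u\in\Mod^\infty(B)$ for some internal $B\supseteq\ster K$'' is met precisely by the witness $B:=\ster K$ (so it amounts exactly to $u\in\Mod^\infty(\ster K)$, using that $\Mod^\infty$ is antitone in its argument), and hence we obtain the localized characterization
\[\Mod^\infty(\ster K)=\bigl\{u\in\ster(\Cnt[\infty](\R^d)):\ (\exists N\in\N)(\forall\alpha\in\N^d)(\forall x\in\ster K)\ \abs{\partial^\alpha u(x)}\le\caninf^{-N}\bigr\}.\]

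Granting this, I would finish by unwinding the definition $\ster\Omega_c=\bigcup_{K\csub\Omega}\ster K$. For ``$\subseteq$'': if $u\in\Mod^\infty(\ster\Omega_c)$ and $K\csub\Omega$ is fixed, then $\ster K\subseteq\ster\Omega_c$ gives $u\in\Mod^\infty(\ster K)$, and the displayed identity produces a \emph{single} $N\in\N$ with $\abs{\partial^\alpha u(x)}\le\caninf^{-N}$ for all $\alpha\in\N^d$ and all $x\in\ster K$; since $K$ was arbitrary, $u\in R$. For ``$\supseteq$'': if $u\in R$ and $x\in\ster\Omega_c$, choose $K\csub\Omega$ with $x\in\ster K$; applying the defining property of $R$ to this $K$ yields $N\in\N$ with $\abs{\partial^\alpha u(x)}\le\caninf^{-N}$ for every $\alpha\in\N^d$, so $u\in\Mod^\infty(\ster\Omega_c)$.

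The only step that is not bare formula-pushing is the inclusion ``$\subseteq$'' of the localized characterization, that is, promoting ``$(\forall x\in\ster K)(\exists N\in\N)\dots$'' to ``$(\exists N\in\N)(\forall x\in\ster K)\dots$''; this uniformization over the internal set $\ster K$ is exactly the content of Proposition~\ref{M-infty-kar} (which rests on Quantifier switching, with the customary care about negating $\le$ on $\ster\R$), so I expect no extra difficulty there. I would stress that one cannot collapse the whole corollary by applying Proposition~\ref{M-infty-kar} to a single internal $B\supseteq\ster\Omega_c$: the set $\ster\Omega_c$ is external, and in general no $N\in\N$ is uniform over all of $\ster\Omega_c$ — which is exactly why the $K$-by-$K$ formulation appearing in the statement is the correct one.
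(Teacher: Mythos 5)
Your proposal is correct and is essentially the paper's (implicit) argument: the corollary is meant to follow by applying Proposition~\ref{M-infty-kar} to each internal set $\ster K$ (with the trivial decomposition $A_n:=\ster K$ and witness $B:=\ster K$), and then unwinding $\ster\Omega_c=\bigcup_{K\csub\Omega}\ster K$ exactly as you do. Your closing remark on why no single internal $B\supseteq\ster\Omega_c$ (and no $K$-uniform $N$) can be expected is also accurate.
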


Thus (cf.\ also \cite{HV-NSPrinciples-tutorial}) the Colombeau algebras $\Gen(\Omega)$, $\Gen^\infty(\Omega)$, $\Gen_c(\Omega)$ and $\Gen^\infty_c(\Omega)$ are quotients of $\Mod(\ster\Omega_c)$, $\Mod^\infty(\ster\Omega_c)$, $\Mod_c(\ster\Omega_c)$ and $\Mod_c^\infty(\ster\Omega_c)$, respectively, modulo
\[\Null(\ster\Omega_c):= \{u\in\ster(\Cnt[\infty](\R^d)): \partial^\alpha u(x)\approxeq 0, \forall \alpha\in\N^d,\forall x\in\ster\Omega_c\}.\]

We call $x\in\ster\R^d$ \defstyle{fast scale} if $x$ belongs to
\[\ster\R^d_{fs}:= \{x\in\ster\R^d: (\exists a\in\R_{>0}) (\abs x\ge \caninf^{-a})\}\]
and we call $x$ \defstyle{slow scale} if $x$ belongs to
\[\ster\R^d_{ss}:=\{x\in\ster\R^d: (\forall a\in\R_{>0}) (\abs x\le \caninf^{-a})\}.\]
We call $x\in\ster\R^d$ a \defstyle{slow scale infinitesimal} (notation: $x\approx_\slow 0$) if $x\approx 0$ and $\frac1{\abs{x}}$ is slow scale, i.e., if
\[\caninf^a\le \abs{x}\le a,\quad \forall a\in\R_{>0}\]
and we call $x$ a \defstyle{fast scale infinitesimal} (notation: $x\approx_\fast 0$) if
\[\abs{x}\le \caninf^a,\quad \text{for some } a\in\R_{>0}.\]
We write $x\approx_\fast y$ (resp.\ $x\approx_\slow y$) for $x-y\approx_\fast 0$ (resp.\ $x-y\approx_\slow 0$).\\
We call a \defstyle{slow scale neighbourhood} of $x_0\in\ster\R^d$ any set that contains $\{x\in\ster\R^d: \abs{x-x_0}\le r\}$ for some $r\approx_\slow 0$ ($r\in\ster\R_{>0}$). A \defstyle{conic slow scale neighbourhood} of $\xi_0\in \ster\Sphere$ is a cone $\Gamma\subseteq\ster\R^d$ with vertex $0$ that contains a slow scale neighbourhood of $\xi_0$ (thus there exists some $r\approx_\slow 0$ ($r\in\ster\R_{>0}$) such that $\abs[\big]{\frac{\xi}{\abs\xi}-\xi_0}\le r\implies \xi \in \Gamma$).

By Cor.\ \ref{cor-zero-on-union}, we obtain:
\begin{lemma}\label{lemma-zero-on-fs-and-ss}
Let $u\in\ster(\Cnt[\infty](\R^d))$. If $u\approxeq 0$ on $\ster\R^d_{fs}\cup \ster\R^d_{ss}$, then $u\approxeq 0$ on $\ster\R^d$.
\end{lemma}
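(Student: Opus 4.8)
The plan is to read Lemma~\ref{lemma-zero-on-fs-and-ss} off Corollary~\ref{cor-zero-on-union}. Recall that $u\approxeq 0$ on a set means exactly that $\abs u\le\caninf^N$ holds there for \emph{every} $N\in\N$. Hence it is enough to fix an arbitrary $N\in\N$ and show $\abs u\le\caninf^N$ on all of $\ster\R^d$; letting $N$ run through $\N$ then gives $u\approxeq 0$ on $\ster\R^d$.

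With $N$ fixed, I would invoke Corollary~\ref{cor-zero-on-union} with $E:=\ster\R^d$, with $u$ viewed as an internal map with values in $\ster\C\cong\ster\R^2$, with bound $r:=\caninf^N$, and with the sequence $r_n:=\caninf^{-1/n}$. This sequence is monotone (strictly decreasing) in $\ster\R$ because $0<\caninf<1$. Taking the second of the two alternatives offered by the corollary, this produces
\[A=\{x\in\ster\R^d:\abs x\ge\caninf^{-1/n}\text{ for some }n\in\N\},\qquad B=\{x\in\ster\R^d:\abs x<\caninf^{-1/n}\text{ for all }n\in\N\}.\]

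The one computation to carry out is to compare $A$ and $B$ with the scale sets. Using that $t\mapsto\caninf^{-t}$ is increasing on $\ster\R$, one gets $A=\ster\R^d_{fs}$ (given $\abs x\ge\caninf^{-a}$ with $a\in\R_{>0}$, choose $n\in\N$ with $n\ge 1/a$, so that $\caninf^{-1/n}\le\caninf^{-a}\le\abs x$; conversely, for $x\in A$ take $a=1/n$) and $B\subseteq\ster\R^d_{ss}$ (if $\abs x<\caninf^{-1/n}$ for every $n$, then for any $a\in\R_{>0}$, choosing $n\ge 1/a$ gives $\abs x<\caninf^{-1/n}\le\caninf^{-a}$). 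In particular $A$ and $B$ are nonempty (for instance $0\in B$, and $A=\ster\R^d_{fs}$ is clearly nonempty), and $A\cup B\subseteq\ster\R^d_{fs}\cup\ster\R^d_{ss}$.

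Since by hypothesis $u\approxeq 0$ on $\ster\R^d_{fs}\cup\ster\R^d_{ss}$, we have in particular $\abs u\le\caninf^N$ on $A\cup B$, so Corollary~\ref{cor-zero-on-union} yields $\abs u\le\caninf^N$ on $E=\ster\R^d$, which is exactly what was needed. I do not expect a genuine obstacle here: the only point requiring attention is that the order on $\ster\R$ is merely partial, so the identifications of $A$ and $B$ above must be justified using only transitivity of $\le$, the compatibility of $<$ with $\le$, and the monotonicity of $t\mapsto\caninf^{-t}$ in $\ster\R$, rather than by solving inequalities as one would over an ordered field.
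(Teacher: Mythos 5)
Your proof is correct and follows exactly the route the paper intends: the paper derives Lemma~\ref{lemma-zero-on-fs-and-ss} directly from Corollary~\ref{cor-zero-on-union}, and your choice $E=\ster\R^d$, $r_n=\caninf^{-1/n}$ (second alternative), $r=\caninf^N$ with the identifications $A=\ster\R^d_{fs}$, $B\subseteq\ster\R^d_{ss}$ is precisely the intended application, just spelled out in detail. No gaps.
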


\section{$\Mod^\infty$-microlocal regularity}
\begin{definition}
$\Mod_{\Schwartz}(\ster\R^d)=\{u\in\ster(\Schwartz(\R^d)): x^\alpha\partial^\beta u(x)\in \ster\C_\Mod,\ \forall x\in\ster\R^d$, $\forall \alpha$, $\beta$ $\in\N^d\}$.
\end{definition}

To keep this paper self-contained, we recast some properties concerning $\Mod_\Schwartz$ and the Fourier transform in this setting (cf.\ also \cite{HVPointwise}):
\begin{lemma}\leavevmode\label{lemma-fourier}
\begin{enumerate}
\item $\Mod_c(\ster\R^d_\Mod)\subseteq\Mod_\Schwartz(\ster\R^d)$.
\item The Fourier transform $\Fourier$ is a bijection $\Mod_\Schwartz(\ster\R^d)\to \Mod_\Schwartz(\ster\R^d)$.
\item Let $u\in\Mod_\Schwartz(\ster\R^d)$. Then for each $k\in\N$, there exists $m\in\N$ s.t.\ $\int_{|x|\ge\caninf^{-m}}\abs{u}\le \caninf^k$.
\item Let $u\in\Mod_\Schwartz(\ster\R^d)$ and $u(x)\approxeq 0$ for each $x\in\ster\R^d_\Mod$. Then $\int \abs u\approxeq 0$.
\item Let $\phi\in\Mod^\infty_c(\ster\R^d_\Mod)$. Then $\fourier \phi(\xi)\approxeq 0$ for all $\xi\in\ster\R^d_{fs}$.
\item Let $u\in\Mod_\Schwartz(\ster\R^d)$. If $u(x)\approxeq 0$ for all $x\in\ster\R^d_{fs}$, then $\fourier u\in\Mod^\infty(\ster\R^d)$.
\end{enumerate}
\end{lemma}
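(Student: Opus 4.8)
The plan is to prove the six items in order, importing the classical Schwartz‑space and Fourier‑transform identities by the Transfer Principle (Theorem~\ref{thm_up-down-transfer}) and controlling moderateness and negligibility by the principles of Sections~3--4. Everything rests on two facts that I would establish first. \emph{Fact~1:} if $u\in\Mod_{\Schwartz}(\ster\R^d)$, then for all multi‑indices $\gamma,\delta$ the quantity $\mathop{\ster\sup}_{x\in\ster\R^d}\abs{x^{\gamma}\partial^{\delta}u(x)}$ is moderate, and hence $\mathop{\ster\sup}_{x}(1+\abs x)^{j}\abs{\partial^{\delta}u(x)}$ is moderate for every $j\in\N$ (because $(1+\abs x)^{j}$ is dominated by a fixed finite sum of monomials $\abs{x^{\mu}}$). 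This is Quantifier switching exactly as in the $\Rightarrow$‑part of Proposition~\ref{M-infty-kar}: if no such moderate bound existed, Quantifier switching would produce a single point $x$ at which $x^{\gamma}\partial^{\delta}u(x)$ is not moderate, contradicting $u\in\Mod_{\Schwartz}(\ster\R^d)$. \emph{Fact~2:} for every $a\in\R_{>0}$ the internal set $\{x\in\ster\R^d:\abs x\ge\rho^{-a}\}$ is contained in $\ster\R^d_{fs}$; hence if $v(x)\approxeq 0$ for all $x\in\ster\R^d_{fs}$, then for each $m\in\N$ the internal inequality $\abs{v(x)}\le\rho^{m}$ holds on that whole set, so $\mathop{\ster\sup}_{\abs x\ge\rho^{-a}}\abs{v(x)}\le\rho^{m}$, i.e.\ this supremum is negligible; likewise $v(x)\approxeq 0$ on $\ster\R^d_{\Mod}$ yields $\mathop{\ster\sup}_{\abs x\le\rho^{-m}}\abs{v(x)}\approxeq 0$ for every $m$.

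Granting these, (1)--(4) are routine. For (1) I would use Proposition~\ref{compactly-supported-in-union} to get $K\in\ster(\mathcal K(\R^d))$ with $\supp u\subseteq K\subseteq\{\abs x\le\rho^{-N'}\}$, transfer of ``a $\Cnt[\infty]$‑function vanishing off a compact set is Schwartz'' to get $u\in\ster(\Schwartz(\R^d))$, and then note that $\abs{x^{\alpha}\partial^{\beta}u(x)}\le\rho^{-N'\abs\alpha-M}$ holds on $K$ (Fact~1 applied to $\Mod$) and trivially on $\ext(K)=\sterco(K)$, hence on $K\stercup\sterco(K)=\ster\R^d$. For (2), $\Fourier$ is a bijection of $\ster(\Schwartz(\R^d))$ by transfer, and the transferred identity $\xi^{\beta}\partial^{\alpha}\fourier u(\xi)=c_{\alpha\beta}\Fourier\bigl(\partial^{\beta}(x^{\alpha}u)\bigr)(\xi)$ gives $\abs{\xi^{\beta}\partial^{\alpha}\fourier u(\xi)}\le C\int\abs{\partial^{\beta}(x^{\alpha}u)}$, which is moderate because $\partial^{\beta}(x^{\alpha}u)$ is a fixed finite integer combination of terms $x^{\gamma}\partial^{\delta}u$ and $\int\abs{x^{\gamma}\partial^{\delta}u}\le C_d\mathop{\ster\sup}_{x}(1+\abs x)^{\abs\gamma+d+1}\abs{\partial^{\delta}u(x)}$ (Fact~1); the same computation handles $\Fourier^{-1}$. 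For (3), transfer of the weighted tail bound $\int_{\abs x\ge R}\abs v\le C_d(1+R)^{-1}\sup_{y}(1+\abs y)^{d+1}\abs{v(y)}$ (valid for $v\in\Schwartz$), evaluated at $R=\rho^{-m}$ and combined with Fact~1, gives $\int_{\abs x\ge\rho^{-m}}\abs u\le C_d\rho^{m-N}\le\rho^{k}$ once $m=k+N+1$. For (4), writing $\int\abs u=\int_{\abs x<\rho^{-m}}\abs u+\int_{\abs x\ge\rho^{-m}}\abs u$, transfer of $\int_A\abs v\le\mathrm{vol}(A)\sup_A\abs v$ bounds the first term by $C_d\rho^{-md}$ times a quantity that Fact~2 makes negligible, and (3) bounds the second; since $k$ is arbitrary, $\int\abs u\approxeq 0$.

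For (5): by Propositions~\ref{compactly-supported-in-union} and~\ref{M-infty-kar} there is $K\in\ster(\mathcal K(\R^d))$ with $\supp\phi\subseteq K\subseteq\{\abs x\le\rho^{-N'}\}$ and a single $N\in\N$ with $\abs{\partial^{\alpha}\phi(x)}\le\rho^{-N}$ for all $\alpha$ and all $x\in K$, hence (by interleaving, since $\partial^{\alpha}\phi=0$ on $\ext(K)$) for all $x\in\ster\R^d$. Transfer of the integration‑by‑parts identity $(i\xi)^{\alpha}\fourier\phi(\xi)=\Fourier(\partial^{\alpha}\phi)(\xi)$ gives $\abs{\xi^{\alpha}}\abs{\fourier\phi(\xi)}\le\int_K\abs{\partial^{\alpha}\phi}\le\mathrm{vol}(K)\rho^{-N}\le C_d\rho^{-N'd-N}$; summing over $\abs\alpha\le j$, using $(1+\abs\xi)^{j}\le C_{j,d}\sum_{\abs\alpha\le j}\abs{\xi^{\alpha}}$ and that $C_{j,d}$ (a fixed real, and the number of such $\alpha$) is $\le\rho^{-1}$, one gets $\abs{\fourier\phi(\xi)}\le\rho^{-N'd-N-1}(1+\abs\xi)^{-j}$ for every $j\in\N$ and every $\xi$. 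If $\xi\in\ster\R^d_{fs}$, say $\abs\xi\ge\rho^{-a}$ with $a\in\R_{>0}$, this is $\le\rho^{ja-N'd-N-1}$, which is $\le\rho^{n}$ as soon as $ja\ge n+N'd+N+1$; hence $\fourier\phi(\xi)\approxeq 0$.

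Item (6) is the step I expect to be the main obstacle. From the transferred identity $\partial^{\alpha}\fourier u(\xi)=c_{\alpha}\Fourier(x^{\alpha}u)(\xi)$ ($\abs{c_\alpha}=1$) we get $\abs{\partial^{\alpha}\fourier u(\xi)}\le\int\abs{x^{\alpha}u}\le\int\abs x^{\abs\alpha}\abs u$ for every $\xi$, so by the definition of $\Mod^{\infty}(\ster\R^d)$ it suffices to exhibit one $N\in\N$, \emph{independent of $\alpha$}, with $\int\abs x^{\abs\alpha}\abs u\le\rho^{-N}$. Fix $\alpha\ne0$ and split $\ster\R^d$ at $\rho^{-1/\abs\alpha}$ and at a larger radius $S$. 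On $\{\abs x\le\rho^{-1/\abs\alpha}\}$ one has $\abs x^{\abs\alpha}\le\rho^{-1}$, so that part is $\le\rho^{-1}\int\abs u\le\rho^{-1-N_0}$, where $\rho^{-N_0}$ bounds $\int\abs u$ (Fact~1) and is independent of $\alpha$. The set $\{\abs x\ge\rho^{-1/\abs\alpha}\}$ lies in $\ster\R^d_{fs}$, where Fact~2 gives $\abs{u(x)}\le\rho^{k}$ for \emph{every} $k\in\N$ while Fact~1 gives $\abs{u(x)}\le C_{\alpha}(1+\abs x)^{-\abs\alpha-d-1}$ with $C_{\alpha}$ moderate; taking $S$ to be the radius where these two bounds meet and $k$ large enough (depending on $\alpha$ and on the moderateness exponent of $C_{\alpha}$), a short computation shows $\int_{\rho^{-1/\abs\alpha}\le\abs x\le S}\abs x^{\abs\alpha}\abs u\le\rho^{-1}$ (the polynomial is swallowed by $\rho^{k}$) and $\int_{\abs x>S}\abs x^{\abs\alpha}\abs u\le\rho^{-1}$ (swallowed by $(1+\abs x)^{-d-1}$). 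Hence $\int\abs x^{\abs\alpha}\abs u\le\rho^{-N_0-2}$ for every $\alpha$, and $N:=N_0+2$ works. The delicate point — and the only place where the hypothesis $u\approxeq 0$ on $\ster\R^d_{fs}$ is genuinely used — is the middle shell: dropping it, $\int\abs x^{\abs\alpha}\abs u$ grows with $\abs\alpha$ and $\fourier u$ need not belong to $\Mod^{\infty}(\ster\R^d)$, so engineering the three‑scale decomposition so that the final exponent is uniform in $\alpha$ is the real work.
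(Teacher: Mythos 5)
Your proposal is correct, and for items (1), (2), (4), (5) it is essentially the paper's argument: your Fact~1 (moderateness of $\mathop{\ster\sup}_{x}\abs{x^{\gamma}\partial^{\delta}u(x)}$ via Quantifier switching) and Fact~2 (sup-negligibility on internal subsets of $\ster\R^d_{fs}$, resp.\ $\ster\R^d_{\Mod}$) are exactly what the paper uses implicitly when it writes bounds like $\sup_{x}\abs{x^{\alpha'}\partial^{\beta'}u(x)}\in\ster\R_{\Mod}$ or ``$\le C\in\ster\R_\Mod$ ($C$ independent of $\alpha$)'', and your handling of (1) and (5) (zero on $\sterco(K)$, uniform moderate bound on the internal set $K$ via Proposition~\ref{M-infty-kar}, conclude by interleaving as in Corollary~\ref{cor-zero-on-union}) matches the paper's, up to whether the Schwartz seminorm estimate in (2) is imported by transferring continuity of $\Fourier$ or re-derived by hand from $\xi^{\beta}\partial^{\alpha}\fourier u=c_{\alpha\beta}\Fourier(\partial^{\beta}(x^{\alpha}u))$. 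The genuine divergence is in (3) and, above all, (6): the paper works with the fast-scale region through overspill --- in (3) it proves the tail bound for infinitely large $m$ and spills down, and in (6) it fixes one infinitely large $k$ with $\abs{u}\le\rho^{k}$ on $\{\abs{x}\ge\rho^{-1/k}\}$ and splits $\int\abs{x^{\alpha}u}$ into three zones, reusing part (3) applied to $x^{\alpha}u$ on the outer zone --- whereas you avoid spilling entirely: in (3) you take $m=k+N+1$ explicitly, and in (6) you split at the standard-exponent radius $\rho^{-1/\abs{\alpha}}$ and optimize an intermediate radius $S$ against the Schwartz decay $C_{\alpha}(1+\abs{x})^{-\abs{\alpha}-d-1}$ with a standard $k(\alpha)$ chosen large enough; since the resulting bound $\rho^{-N_{0}-2}$ on $\int\abs{x^{\alpha}u}$ is uniform in $\alpha$, this yields $\fourier u\in\Mod^{\infty}(\ster\R^d)$ just as the paper's argument does. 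The paper's route is shorter once the spilling machinery is in hand and recycles (3); yours is more elementary (only saturation via your Fact~1 plus transfer), at the cost of the small optimization computation, and it makes explicit why the $\alpha$-dependence of the auxiliary parameters $k(\alpha)$, $S$ is harmless.
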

\begin{proof}
1. By definition, $\Mod_c(\ster\R^d_\Mod)\subseteq\ster(\Cnt[\infty]_c(\R^d))\subseteq\ster(\Schwartz(\R^d))$. Let $u\in\Mod_c(\ster\R^d_\Mod)$. By Prop.\ \ref{compactly-supported-in-union}, $\supp(u)\subseteq B(0,\caninf^{-M})$, for some $M\in\N$. Thus if $\abs x >\caninf^{-M}$, $x^\alpha\partial^\beta u(x)=0$. If $\abs x\le \caninf^{-M}$, $\abs{x^\alpha\partial^\beta u(x)}\le \caninf^{-M\abs\alpha}\abs{\partial^\beta u(x)}\in\ster\R_\Mod$. The result follows by Cor.\ \ref{cor-zero-on-union}.

\item 2. Let $u\in\Mod_\Schwartz(\ster\R^d)$. As $\Fourier$: $\Schwartz\to\Schwartz$ is continuous, there exist $C\in\R$ and $N\in\N$ such that for each $\alpha,\beta\in\N^d$ (by transfer) $\sup_{\xi\in\ster\R^d}\abs{\xi^\alpha\partial^\beta\fourier u(\xi)}\le C \sup_{x\in\ster\R^d,\abs{\alpha'},\abs{\beta'}\le N}\abs[\big]{x^{\alpha'}\partial^{\beta'} u(x)}$ $\in\ster\R_\Mod$. Hence $\fourier u\in\Mod_\Schwartz(\ster\R^d)$. The result follows by Fourier inversion.

\item 3. If $m\in\ster\N_\infty$, then $\int_{\abs x\ge \caninf^{-m}} \abs u\le C\int_{\abs x\ge \caninf^{-m}}\langle x\rangle^{-d-2}\,dx\le C\caninf^{m}\int_{\ster\R}\langle x\rangle^{-d-1}\,dx\le\caninf^k$ ($C\in\ster\R_\Mod$). The result follows by overspill.

\item 4. Let $k\in\N$ and $m\in\N$ as in part (3). Then
\[\textstyle \int \abs{u} = \int_{\abs x\ge \caninf^{-m}} \abs{u} +\int_{\abs x\le \caninf^{-m}} \abs u\le \caninf^k+ \sup_{\abs x\le \caninf^{-m}}\abs{u} \cdot \int_{\abs x\le \caninf^{-m}}1\approxeq \caninf^k.\]

\item 5. By Prop.\ \ref{compactly-supported-in-union}, $\supp(\phi)\subseteq B(0,\caninf^{-M})$ for some $M\in\N$. Let $|\xi|\ge \caninf^{-1/N}$ ($N\in\N$). Then for each $\alpha\in\N^d$, $\abs[\big]{\xi^\alpha \fourier \phi(\xi)}\le \int_{\abs x\le \caninf^{-M}} \abs{\partial^\alpha \phi}\le C \in \ster\R_\Mod$ ($C$ is independent of $\alpha$). Then for any $m\in\N$, $\abs[]{\fourier \phi(\xi)}\le C\caninf^{-1}\abs\xi^{-m}\le C\caninf^{m/N-1}$.

\item 6. By overspill, there exists $k\in\ster\N_\infty$ such that $\abs{u(x)}\le \caninf^{k}$ for each $x\in\ster\R^d$ with $\abs x\ge \caninf^{-1/k}$. Let $\alpha\in\N^d$. For a suitable $m\in\N$, $\int_{\abs x\ge \caninf^{-m}}\abs {x^\alpha u(x)}\,dx \le 1$ by part (3). Further, $\int_{\caninf^{-1/k}\le \abs x\le \caninf^{-m}}\abs {x^\alpha u(x)}\,dx\approxeq 0$ and
\[\textstyle\int_{\abs x\le \caninf^{-1/k}}\abs {x^\alpha u(x)}\,dx\le \caninf^{\abs{\alpha}/k}\sup_{\abs x\le \caninf^{-1/k}}\abs{u(x)} \le \caninf^{-1} \sup_{\abs x\le \caninf^{-1/k}}\abs{u(x)}.\]
Thus for each $\xi\in\ster\R^d$, $\abs{\partial^\alpha \fourier u(\xi)}\le \int \abs{x^\alpha u(x)}\,dx\le C\in\ster\R_\Mod$ ($C$ is independent of $\alpha$).
\end{proof}

We denote $\Sphere := \{x\in\R^d: \abs x = 1\}$.
\begin{definition}\label{df-regular}
$u\in \Mod(\ster\Omega_c)$ is $\Mod^\infty$-microlocally regular at $(x_0,\xi_0)\in\ster \Omega_c\times \ster\Sphere$ if there exists $v\in\Mod_c(\ster\Omega_c)$ such that
\[u(x)=v(x), \ \forall x\approx_\fast x_0 \qquad \text{and}\qquad \fourier v(\xi)\approxeq 0,\ \forall \xi \in \ster\R^d_{fs} \text{ with }  \frac{\xi}{\abs\xi}\approx_\fast \xi_0.\]
\end{definition}

\begin{proposition}\label{cut-off}
Let $v\in\Mod_c(\ster\Omega_c)$, $\phi\in\Mod^\infty(\ster\Omega_c)$ and 
$\xi_0\in\ster\Sphere$. Let $\fourier v(\xi)\approxeq 0$ for each $\xi\in\ster\R^d_{fs}$ with $\frac{\xi}{\abs\xi}\approx_\fast \xi_0$. Then also $\fourier{\phi v}(\xi)\approxeq 0$ for each $\xi\in\ster\R^d_{fs}$ with $\frac{\xi}{\abs\xi}\approx_\fast \xi_0$.
\end{proposition}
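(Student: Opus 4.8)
The plan is to reduce the statement to an estimate for the convolution $\fourier\psi*\fourier v$ of Fourier transforms of $\Mod_\Schwartz$-functions, after cutting $\phi$ off near $\supp(v)$. Since $v\in\Mod_c(\ster\Omega_c)$, the corollary to Proposition~\ref{compactly-supported-in-union} gives $K_0\csub\Omega$ with $\supp(v)\subseteq\ster K_0$; pick a standard $\chi\in\Cnt[\infty]_c(\Omega)$ with $\chi\equiv 1$ on $K_0$, set $K':=\supp\chi\csub\Omega$ and $\psi:=\phi\,\ster\chi$. By transfer $\ster\chi\equiv 1$ on $\ster K_0$, while $v=0$ on $\ext(\ster K_0)=\sterco(\ster K_0)$; since the internal set $\{x:\ster\chi(x)v(x)=v(x)\}$ (internal by Theorem~\ref{thm_IDP}) is closed under interleaving and contains $\ster K_0$ and $\sterco(\ster K_0)$, it equals $\ster K_0\stercup\sterco(\ster K_0)=\ster\R^d$, so $\ster\chi\,v=v$ and $\phi v=\psi v$. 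Moreover $\psi$ is supported in $\ster{K'}\subseteq\ster\Omega_c\subseteq\ster\R^d_\Mod$, and on $\ster{K'}$ the Leibniz rule, the $\Mod^\infty$-bound for $\phi$, and the fact that each standard constant is $\le\caninf^{-1}$ furnish a single $N\in\N$ with $\abs{\partial^\alpha\psi}\le\caninf^{-N}$ for all $\alpha\in\N^d$; hence $\psi\in\Mod^\infty_c(\ster\R^d_\Mod)$. By Lemma~\ref{lemma-fourier}(1),(2),(5), $\fourier\psi\in\Mod_\Schwartz(\ster\R^d)$ with $\fourier\psi(\eta)\approxeq 0$ for all $\eta\in\ster\R^d_{fs}$; likewise $\fourier v\in\Mod_\Schwartz(\ster\R^d)$, with the (unchanged) hypothesis $\fourier v(\zeta)\approxeq 0$ whenever $\zeta\in\ster\R^d_{fs}$ and $\frac{\zeta}{\abs\zeta}\approx_\fast\xi_0$. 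As $\Mod_\Schwartz$-functions are dominated by $C\langle\cdot\rangle^{-d-2}$ with $C\in\ster\R_\Mod$ (cf.\ the proof of Lemma~\ref{lemma-fourier}(3)), $\int\abs{\fourier\psi}\le\caninf^{-M}$ and $\int\abs{\fourier v}\le\caninf^{-M}$ for some $M\in\N$.

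By transfer of the convolution theorem, $\fourier{\phi v}(\xi)=\fourier{\psi v}(\xi)=c\int\fourier\psi(\eta)\,\fourier v(\xi-\eta)\,d\eta$ for a constant $c$ and all $\xi\in\ster\R^d$. Fix $\xi\in\ster\R^d_{fs}$ with $\frac{\xi}{\abs\xi}\approx_\fast\xi_0$, say $\abs\xi\ge\caninf^{-a}$ and $\abs[\big]{\frac{\xi}{\abs\xi}-\xi_0}\le\caninf^b$ with $a,b\in\R_{>0}$. I split the $\eta$-integral at the internal radius $r:=\caninf^{-a/4}$ (the domain splits, $\eps$-wise, as a disjoint union). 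On $\{\abs\eta>r\}\subseteq\ster\R^d_{fs}$ we have $\fourier\psi(\eta)\approxeq 0$, so for each $n\in\N$,
\[\int_{\abs\eta>r}\abs{\fourier\psi(\eta)}\,\abs{\fourier v(\xi-\eta)}\,d\eta\le\caninf^n\int\abs{\fourier v(\xi-\eta)}\,d\eta=\caninf^n\int\abs{\fourier v}\le\caninf^{n-M},\]
which is $\approxeq 0$. On $\{\abs\eta\le r\}$, using $a/4<a$ (so $r\le\tfrac12\abs\xi$) we get $\abs{\xi-\eta}\ge\tfrac12\abs\xi\ge\caninf^{-a/2}$, and from $\abs[\big]{\frac{\xi-\eta}{\abs{\xi-\eta}}-\frac{\xi}{\abs\xi}}\le\frac{2\abs\eta}{\abs\xi}\le2\caninf^{3a/4}$ also $\abs[\big]{\frac{\xi-\eta}{\abs{\xi-\eta}}-\xi_0}\le\caninf^{b'}$ with $b':=\tfrac12\min\{3a/4,b\}>0$; hence $\xi-\eta$ belongs to the internal set $T:=\{\zeta\in\ster\R^d:\abs\zeta\ge\caninf^{-a/2},\ \abs{\zeta-\abs\zeta\,\xi_0}\le\caninf^{b'}\abs\zeta\}$, which lies in the fast-scale conic neighbourhood of $\xi_0$. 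By hypothesis $\fourier v\approxeq 0$ on $T$, so for each $n\in\N$,
\[\int_{\abs\eta\le r}\abs{\fourier\psi(\eta)}\,\abs{\fourier v(\xi-\eta)}\,d\eta\le\caninf^n\int_{\abs\eta\le r}\abs{\fourier\psi}\le\caninf^n\int\abs{\fourier\psi}\le\caninf^{n-M},\]
again $\approxeq 0$. Adding the two pieces, $\fourier{\phi v}(\xi)\approxeq 0$, which is the claim.

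The step I expect to be delicate is the choice of the splitting radius $r$: it must be large enough that $\{\abs\eta>r\}$ consists only of fast-scale frequencies, so that the negligibility of $\fourier\psi$ from Lemma~\ref{lemma-fourier}(5) can be used, yet small enough that on $\{\abs\eta\le r\}$ the shifted frequency $\xi-\eta$ still lies in one \emph{fixed internal} fast-scale conic neighbourhood of $\xi_0$, so that the hypothesis on $\fourier v$ applies there. Because $\xi$ is only assumed fast scale, $\abs\xi$ may be merely ``moderately large'', so $r$ cannot be taken proportional to $\abs\xi$; the choice $r=\caninf^{-a/4}$ works precisely because $a/4$ is positive (making $r$ fast scale) and $<a$ (keeping $r$ below $\tfrac12\abs\xi$). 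It is also worth recording that ``$\fourier v\approxeq 0$ on the internal set $T$'' yields, for each \emph{fixed} $n\in\N$, the uniform bound $\abs{\fourier v}\le\caninf^n$ on all of $T$, which is exactly what legitimises pulling the bound out of the integral — no uniformity over $n$ is ever needed.
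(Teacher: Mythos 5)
Your proof is correct, but the core of your argument differs from the paper's. The paper, after the same reduction to a compactly supported cut-off and the same convolution formula, splits according to whether $\xi-\eta$ is fast scale or slow scale: these are \emph{external} sets, and the two cases (Lemma \ref{lemma-fourier}(5) for $\fourier\phi$, the hypothesis on $\fourier v$ for the slow-scale case, where $\eta$ is then automatically fast scale with $\frac{\eta}{\abs\eta}\approx_\fast\xi_0$) only show the integrand is negligible on $\ster\R^d_{fs}\cup\ster\R^d_{ss}$; the interleaving machinery (Lemma \ref{lemma-zero-on-fs-and-ss}, i.e.\ Cor.\ \ref{cor-zero-on-union}) then upgrades this to negligibility at every $\eta$, and Lemma \ref{lemma-fourier}(4) converts pointwise negligibility of a $\Mod_\Schwartz$ integrand into negligibility of the integral. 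You avoid both of these lemmas by cutting the integral at the \emph{internal} radius $r=\caninf^{-a/4}$ tied to the fast-scale exponent of $\abs\xi$, so that the two regions are internal and literally cover $\ster\R^d$, and you conclude with sup-times-$L^1$ estimates, using moderateness of $\int\abs{\fourier\psi}$ and $\int\abs{\fourier v}$ (Lemma \ref{lemma-fourier}(3)-type bounds). What your route buys is a more quantitative, hands-on argument with no appeal to the saturation-based Lemma \ref{lemma-union-with-exterior}/\ref{lemma-zero-on-fs-and-ss}; what it costs is the explicit bookkeeping with $a$, $b$, $b'$ and the step you rightly flag: passing from pointwise negligibility on an internal set to the uniform bound $\sup\le\caninf^{n}$ that you pull out of the integral. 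That step is legitimate (near-attainment of the internal sup: choose $\eta_\eps\in A_\eps$ with $\abs{w_\eps(\eta_\eps)}\ge\sup_{A_\eps}\abs{w_\eps}-\eps^{n+1}$ and evaluate the pointwise hypothesis there, giving $\sup\le\caninf^{n}$ after adjusting $n$ by one), and it is exactly the step the paper itself uses implicitly when pulling $\sup_{\abs x\le\caninf^{-m}}\abs u$ out of the integral in the proof of Lemma \ref{lemma-fourier}(4), so your proof sits at the same level of rigour as the paper's; spelling out that one-line argument (and, similarly, the pointwise-to-uniform upgrade behind the $C\langle x\rangle^{-d-2}$ domination you borrow from Lemma \ref{lemma-fourier}(3)) would make it self-contained. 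Your explicit construction of $\psi=\phi\,\ster\chi$ via interleaving is a correct elaboration of the paper's unexplained ``w.l.o.g.\ $\phi\in\Mod^\infty_c(\ster\Omega_c)$''.
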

\begin{proof}
W.l.o.g., $\phi\in\Mod^\infty_c(\ster\Omega_c)$.
Fix $\xi\in \ster\R^d_{fs}$ with $\frac{\xi}{\abs\xi}\approx_\fast \xi_0$. Then
\[\fourier{\phi v}(\xi) = \int\fourier\phi(\xi-\eta)\fourier v(\eta)\,d\eta.\]
By Lemma \ref{lemma-fourier}, $\fourier\phi(\xi-\eta)\approxeq 0$ if $\xi-\eta\in\ster\R^d_{fs}$.

Now let $\xi-\eta\in\ster\R^d_{ss}$. Then also $\eta\in\ster\R^d_{fs}$. Since $\frac{\abs{\xi-\eta}}{\abs\xi}\approx_\fast 0$,
\[
\frac{\eta}{\abs{\eta}}\approx_\fast \frac{\eta}{\abs\xi}\approx_\fast \frac{\xi}{\abs\xi}\approx_\fast \xi_0.
\]
Hence $\fourier v(\eta)\approxeq 0$. By Lemma \ref{lemma-zero-on-fs-and-ss}, $\fourier\phi(\xi-\eta)\fourier v(\eta)\approxeq 0$ for each $\eta\in\ster\R^d$. As $\phi v \in\Mod_\Schwartz$, also $\fourier{\phi v}\in\Mod_\Schwartz$. By Lemma \ref{lemma-fourier}, $\fourier{\phi v}(\xi)\approxeq 0$.
\end{proof}

\begin{corollary}\label{regularity-via-cut-off}
Let $\phi\in\Mod^\infty(\ster\Omega_c)$. If $u\in\Mod(\ster\Omega_c)$ is $\Mod^\infty$-microlocally regular at $(x_0,\xi_0)$, then also $\phi u$ is $\Mod^\infty$-microlocally regular at $(x_0,\xi_0)$.
\end{corollary}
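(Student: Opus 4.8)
The plan is to take the witness $v$ for the $\Mod^\infty$-microlocal regularity of $u$ at $(x_0,\xi_0)$ and to show that $\phi v$ serves as a witness for $\phi u$. So let $v\in\Mod_c(\ster\Omega_c)$ be such that $u(x)=v(x)$ for all $x\approx_\fast x_0$ and $\fourier v(\xi)\approxeq 0$ for all $\xi\in\ster\R^d_{fs}$ with $\frac{\xi}{\abs\xi}\approx_\fast\xi_0$. Since $\Mod^\infty(\ster\Omega_c)\subseteq\Mod(\ster\Omega_c)$, the Leibniz rule shows that $\phi u\in\Mod(\ster\Omega_c)$, so the statement makes sense.

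First I would check that $\phi v\in\Mod_c(\ster\Omega_c)$. By Prop.\ \ref{compactly-supported-in-union} there is $K\csub\Omega$ with $\supp(v)\subseteq\ster K$. Then $\supp(\phi v)\subseteq\ster K$ as well, and likewise $\supp(\partial^\alpha(\phi v))\subseteq\ster K$ for every $\alpha\in\N^d$, so all derivatives of $\phi v$ vanish on $\ext(\ster K)$; on $\ster K\subseteq\ster\Omega_c$ the Leibniz rule expresses $\partial^\alpha(\phi v)$ as a finite sum of products of moderate numbers, hence moderate. Thus $\phi v\in\Mod(\ster\R^d)$ and is compactly supported in $\ster\Omega_c$, i.e.\ $\phi v\in\Mod_c(\ster\Omega_c)$.

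Next I would observe that for $x\approx_\fast x_0$ we have $u(x)=v(x)$, hence $(\phi u)(x)=\phi(x)u(x)=\phi(x)v(x)=(\phi v)(x)$. Finally, the remaining requirement is that $\fourier{\phi v}(\xi)\approxeq 0$ for all $\xi\in\ster\R^d_{fs}$ with $\frac{\xi}{\abs\xi}\approx_\fast\xi_0$ — but this is exactly the conclusion of Proposition \ref{cut-off} applied to $v$ and $\phi$, whose hypotheses ($v\in\Mod_c(\ster\Omega_c)$, $\phi\in\Mod^\infty(\ster\Omega_c)$, $\xi_0\in\ster\Sphere$, and $\fourier v$ negligible on the relevant cone) are precisely what we have. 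By Definition \ref{df-regular}, $\phi u$ is then $\Mod^\infty$-microlocally regular at $(x_0,\xi_0)$, with witness $\phi v$.

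I do not expect a genuine obstacle here: the entire analytic content already sits in Proposition \ref{cut-off}, and this corollary is essentially a repackaging of it. The only routine point deserving a word of care is the verification that the product $\phi v$ again lies in $\Mod_c(\ster\Omega_c)$, which is the short support-plus-Leibniz argument above.
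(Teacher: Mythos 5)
Your proposal is correct and is exactly the argument the paper intends (the corollary is stated without proof precisely because it follows immediately from Proposition \ref{cut-off}): take the witness $v$ from Definition \ref{df-regular}, note $\phi u=\phi v$ for $x\approx_\fast x_0$, and apply Proposition \ref{cut-off} to conclude that $\phi v$ witnesses the regularity of $\phi u$. Your extra verification that $\phi v\in\Mod_c(\ster\Omega_c)$ via the support and Leibniz argument is a sensible routine check and does not change the approach.
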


We use the following notation. We fix $\phi_0\in\test(B(0,1))$ with $0\le\phi_0\le 1$ and with $\phi_0(x)=1$ for each $x\in B(0,1/2)$. For $m\in\ster\N$ and $x_0\in\ster\R^d$, we denote
\[\phi_{m,x_0}(x):= \phi_0\Bigl(\frac{x-x_{0}}{\caninf^{1/m}}\Bigr).\]

\begin{proposition}\label{def-regular-with-cutoff}
For $u\in \Mod(\ster\Omega_c)$ and $(x_0,\xi_0)\in\ster \Omega_c\times \ster\Sphere$, the following are equivalent:
\begin{enumerate}
\item $u$ is $\Mod^\infty$-microlocally regular at $(x_0,\xi_0)$
\item there exists $\phi\in\Mod_c^\infty(\ster\Omega_c)$ such that
\begin{equation}\label{eq-def-M-infty-mr}
\phi(x)=1, \ \forall x\approx_\fast x_0 \qquad \text{and}\qquad \fourier{\phi u}(\xi)\approxeq 0,\ \forall \xi \in \ster\R^d_{fs}\text{ with }\frac\xi{\abs\xi}\approx_\fast \xi_0
\end{equation}
\item there exists $\phi\in\Mod_c^\infty(\ster\Omega_c)$ and $R\in\ster\R_{ss}$ such that
\[\begin{cases}
\abs{\partial^\alpha\phi(x)}\le R\\
\abs{\phi(x)}\ge \frac1R,
\end{cases}
\!\forall x\approx_\fast x_0, \ \forall \alpha\in\N^d \text{ and }\ \fourier{\phi u}(\xi)\approxeq 0,\ \forall \xi \in \ster\R^d_{fs}\text{ with }\frac\xi{\abs\xi}\approx_\fast \xi_0\]
\item there exists $m\in\ster\N_\infty$ (with $m$ sufficiently small, such that $\phi_{m,x_0}\in\Mod_c(\ster\Omega_c)$) such that $\fourier{\phi_{m,x_0} u}(\xi)\approxeq 0,\ \forall \xi \in \ster\R^d_{fs}\text{ with }\frac\xi{\abs\xi}\approx_\fast \xi_0$.
\end{enumerate}
\end{proposition}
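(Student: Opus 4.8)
The approach is to prove the five implications $(2)\Rightarrow(1)$, $(2)\Rightarrow(3)$, $(4)\Rightarrow(2)$, $(1)\Rightarrow(4)$ and $(3)\Rightarrow(4)$; the resulting digraph on $\{1,2,3,4\}$ is strongly connected, so all four statements are equivalent. The first two implications are immediate: for $(2)\Rightarrow(1)$ take $v:=\phi u\in\Mod_c(\ster\Omega_c)$; for $(2)\Rightarrow(3)$ take $R:=1\in\ster\R_{ss}$, and observe that, since $\phi\equiv 1$ on the fast scale neighbourhood $\{x\in\ster\R^d:x\approx_\fast x_0\}=\bigcup_{n\in\N}\{x:\abs{x-x_0}\le\caninf^{1/n}\}$ of $x_0$, the function $\phi$ is locally constant around each point of that set, so $\partial^\alpha\phi(x)=0$ there for $\abs\alpha\ge 1$ while $\abs{\phi(x)}=1$.

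The remaining three implications all rest on two elementary facts about the cut-offs $\phi_{m,x_0}$ for $m\in\ster\N_\infty$. First, $\phi_{m,x_0}\in\Mod^\infty(\ster\R^d)$ always: since $\partial^\alpha\phi_{m,x_0}(x)=\caninf^{-\abs\alpha/m}(\partial^\alpha\phi_0)\bigl(\frac{x-x_0}{\caninf^{1/m}}\bigr)$ and $\abs\alpha/m\approx 0$, the factor $\caninf^{-\abs\alpha/m}$ is a slow scale, so $\abs{\partial^\alpha\phi_{m,x_0}(x)}\le\caninf^{-1}$ for every $x$ and every fixed $\alpha\in\N^d$; as the quantifier over $\alpha$ is external, this gives $\phi_{m,x_0}\in\Mod^\infty(\ster\R^d)\cap\Mod(\ster\R^d)$. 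Second, $\phi_{m,x_0}\equiv 1$ near $x_0$: if $x\approx_\fast x_0$ then $\abs{x-x_0}\le\caninf^a$ for some real $a>0$, and $\caninf^a\le\caninf^{1/m}/2$ because $1/m\approx 0$, so $\phi_{m,x_0}(x)=\phi_0\bigl(\frac{x-x_0}{\caninf^{1/m}}\bigr)=1$. The implication $(4)\Rightarrow(2)$ follows at once: the $m$ provided by (4) satisfies $\phi_{m,x_0}\in\Mod_c(\ster\Omega_c)$, hence $\phi_{m,x_0}\in\Mod^\infty_c(\ster\Omega_c)$ by the first fact; the second fact gives $\phi_{m,x_0}\equiv 1$ on $\{x\approx_\fast x_0\}$; and the required Fourier estimate is exactly the one assumed in (4).

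For $(1)\Rightarrow(4)$ I first fix $K'\csub\Omega$ with $\{x:\abs{x-x_0}\le\delta\}\subseteq\ster K'$ for some real $\delta>0$, which is possible since $x_0\in\ster\Omega_c$. The set $\{x\in\ster\R^d:u(x)=v(x)\}$ is internal and contains the external union $\bigcup_n\{x:\abs{x-x_0}\le\caninf^{1/n}\}$, so $\{m\in\ster\N:\{x:\abs{x-x_0}\le\caninf^{1/m}\}\subseteq\{u=v\}\cap\ster K'\}$ is internal by Theorem \ref{thm_IDP} and contains $\N$; by overspill it contains some $\omega\in\ster\N_\infty$. Then $\supp\phi_{\omega,x_0}\subseteq\ster K'$, so $\phi_{\omega,x_0}\in\Mod^\infty_c(\ster\Omega_c)$, and $\phi_{\omega,x_0}u=\phi_{\omega,x_0}v$, since both sides vanish off $\supp\phi_{\omega,x_0}$ while $u=v$ there. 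Applying Proposition \ref{cut-off} to $v\in\Mod_c(\ster\Omega_c)$ with the cut-off $\phi_{\omega,x_0}\in\Mod^\infty(\ster\Omega_c)$ yields $\fourier{\phi_{\omega,x_0}u}(\xi)=\fourier{\phi_{\omega,x_0}v}(\xi)\approxeq 0$ for all $\xi\in\ster\R^d_{fs}$ with $\frac\xi{\abs\xi}\approx_\fast\xi_0$, which is (4).

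The implication $(3)\Rightarrow(4)$ is the main obstacle, for two reasons: the bounds $\abs{\partial^\alpha\phi(x)}\le R$ and $\abs{\phi(x)}\ge 1/R$ are assumed only on the external set $\{x\approx_\fast x_0\}$, and the natural multiplier $\phi_{\omega,x_0}/\phi$ must be shown to lie in $\Mod^\infty$. For the first point, using as in the proof of Proposition \ref{M-infty-kar} that $\{\alpha\in\ster\N^d:\abs\alpha\le n\}$ is the interleaved closure of the finite set $\{\alpha\in\N^d:\abs\alpha\le n\}$ for standard $n$, the set of $m\in\ster\N$ for which $\{x:\abs{x-x_0}\le\caninf^{1/m}\}\subseteq\ster K'$ and $\abs{\partial^\alpha\phi(x)}\le R$, $\abs{\phi(x)}\ge 1/R$ hold for all $x$ with $\abs{x-x_0}\le\caninf^{1/m}$ and all $\abs\alpha\le m$ is internal and contains $\N$; overspill gives $\omega\in\ster\N_\infty$ with these properties. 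In particular $\phi$ is bounded below in modulus on a neighbourhood of $\supp\phi_{\omega,x_0}$, so $h:=\phi_{\omega,x_0}/\phi$ (extended by $0$ off $\supp\phi_{\omega,x_0}$) is a well-defined internal $\Cnt[\infty]$-function with $h\phi=\phi_{\omega,x_0}$. For the second point, that $R$ is a \emph{slow scale} is decisive: differentiating $h$ by the Leibniz rule and the iterated identity $\partial_j(1/\phi)=-\partial_j\phi/\phi^2$ expresses $\partial^\gamma h$ as a universal polynomial in $1/\phi$, the $\partial^\beta\phi$ ($\abs\beta\le\abs\gamma$) and the $\partial^\beta\phi_{\omega,x_0}$; on $\supp\phi_{\omega,x_0}$ each of $1/\abs\phi$ and $\abs{\partial^\beta\phi}$ is bounded by the slow scale $R$ and each $\abs{\partial^\beta\phi_{\omega,x_0}}$ by the slow scale $\caninf^{-\abs\beta/\omega}\sup\abs{\partial^\beta\phi_0}$, whence $\abs{\partial^\gamma h(x)}\le\caninf^{-1}$ for each fixed $\gamma$ and so $h\in\Mod^\infty_c(\ster\Omega_c)$ — a conclusion that would fail if $R$ were merely moderate, since then $\abs\gamma$ would enter the exponent. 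Finally $\phi_{\omega,x_0}u=h\cdot(\phi u)$ with $\phi u\in\Mod_c(\ster\Omega_c)$ and $\fourier{\phi u}$ enjoying the decay assumed in (3), so Proposition \ref{cut-off} applied to $\phi u$ with the cut-off $h$ gives $\fourier{\phi_{\omega,x_0}u}(\xi)\approxeq 0$ for the relevant $\xi$; since $\phi_{\omega,x_0}\in\Mod_c(\ster\Omega_c)$, this establishes (4) and completes the cycle.
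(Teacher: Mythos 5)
Your proposal is correct and, despite the rearranged implication digraph (you prove $(2)\Rightarrow(1)$, $(2)\Rightarrow(3)$, $(4)\Rightarrow(2)$, $(1)\Rightarrow(4)$, $(3)\Rightarrow(4)$ instead of the paper's cycle $(1)\Rightarrow(2)\Rightarrow(3)\Rightarrow(4)\Rightarrow(1)$), it uses essentially the same ingredients as the paper: overspill to an infinitely large index $m$ so that the hypothesis holds on the ball of radius $\caninf^{1/m}$ (with $\abs\alpha\le m$ via interleaved closure), the standard cut-offs $\phi_{m,x_0}$, division $\phi_{m,x_0}/\phi$ made $\Mod^\infty$ precisely because $R$ is slow scale, and Proposition \ref{cut-off} to transfer the Fourier decay. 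You merely fill in routine details the paper leaves implicit (e.g.\ that $\phi_{m,x_0}\in\Mod^\infty$ and the Leibniz estimates for $\phi_{m,x_0}/\phi$), so no substantive difference or gap.
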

\begin{proof}
$(1)\Rightarrow(2)$: choose $v$ as in the definition of $\Mod^\infty$-microlocal regularity. By overspill, there exists some $m\in\ster\N_\infty$ such that $u(x)=v(x)$ for each $x\in\ster\R^d$ with $\abs{x-x_0}\le \caninf^{1/m}$. For $\phi:=\phi_{m,x_0}\in\Mod_c^\infty(\ster\Omega_c)$, we have $\phi(x)=1$ for each $x\approx_\fast x_0$ and $\phi u=\phi v$. By Proposition \ref{cut-off}, $\fourier{\phi u}(\xi)=\fourier{\phi v}(\xi)\approxeq 0$ for each $\xi\in\ster\R^d_{fs}$ with $\frac\xi{\abs\xi}\approx_\fast\xi_0$.

$(2)\implies(3)$: trivial.

$(3)\implies(4)$: by overspill, there exists $m\in\ster\N_\infty$ such that $\abs{\phi(x)}\ge 1/R$ and $\abs{\partial^\alpha\phi(x)}\le R$ for each $x\in\ster\R^d$ with $\abs{x-x_0}\le \caninf^{1/m}$ and for each $\alpha\in\ster\N^d$ with $\abs\alpha\le m$. Then $\phi_{m,x_0}\in\Mod_c^\infty(\ster\Omega_c)$ (if $m\in\ster\N_\infty$ is sufficiently small) and $\frac{\phi_{m,x_0}}\phi\in\Mod^\infty_c(\ster\Omega_c)$, whence $\fourier{\phi_{m,x_0} u}(\xi)=\Fourier\bigl(\frac{\phi_{m,x_0}}{\phi}\phi u\bigr)(\xi)\approxeq 0$ for each $\xi \in \ster\R^d_{fs}$ with $\frac\xi{\abs\xi}\approx_\fast \xi_0$ by Prop.\ \ref{cut-off}.

$(4)\implies (1)$: let $v:=\phi_{m,x_0} u\in \Mod_c(\ster\Omega_c)$.
\end{proof}

\section{Consistency with $\Mod^\infty$-regularity}
We now proceed to show that the projection of the wave front set in the first coordinate is the singular support (Theorem \ref{proj-of-WF-is-singular-support}). 

\begin{lemma}
Let $u\in\Mod(\ster\Omega_c)$ and $V\subseteq\ster\Omega_c$ be internal. Then there exists $m_0\in\ster\N_\infty$ such that $\phi_{m,x}\in\Mod_c(\ster\Omega_c)$ for each $x\in V$ and each $m\le m_0$ ($m\in\ster\N$).
\end{lemma}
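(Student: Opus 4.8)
The plan is to dominate the supports of all the cut-offs $\phi_{m,x}$ (for $x\in V$ and $m\le m_0$) by a single fixed internal compact subset of $\ster\Omega_c$. Three ingredients enter, in this order: Lemma~\ref{lemma-internal-subset-of-union}, to trap the internal set $V$ inside one $\ster K$; the Transfer Principle, to enlarge $K$ by a classical radius; and overspill, to choose the threshold $m_0$.

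First I would fix a compact exhaustion $\Omega=\bigcup_{n\in\N}K_n$ with $K_n\in\mathcal K(\R^d)$ and each $K_n$ contained in the interior of $K_{n+1}$ (we may assume $\Omega\ne\emptyset$, for otherwise $\ster\Omega_c=\emptyset$ has no internal subset). Then $\ster\Omega_c=\bigcup_n\ster K_n$, so, $V$ being internal, Lemma~\ref{lemma-internal-subset-of-union} yields some $n\in\N$ with $V\subseteq\ster K_n$; in particular $K_n\ne\emptyset$ since $V\ne\emptyset$. As $K_n$ is compact and contained in the interior of $K_{n+1}$, there is a standard $\delta\in\R_{>0}$ with $\overline{B(y,\delta)}\subseteq K_{n+1}$ for all $y\in K_n$. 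Applying the Transfer Principle to the transferrable formula
\[(\forall y\in K_n)(\forall z\in\R^d)(\abs{z-y}\le\delta\implies z\in K_{n+1})\]
then gives: for all $x\in\ster K_n$ (hence for all $x\in V$) and all $z\in\ster\R^d$ with $\abs{z-x}\le\delta$, one has $z\in\ster K_{n+1}$.

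Next, since $0<\caninf<1$, we have $\caninf^{1/(m+1)}\le\delta$ for every $m\in\N$ (as $\eps^{1/(m+1)}\le\delta$ for small $\eps$), so the internal set $\{m\in\ster\N:\caninf^{1/(m+1)}\le\delta\}$ contains $\N$; by overspill there is $m_0\in\ster\N_\infty$ with $\caninf^{1/m}\le\delta$ for every $m\in\ster\N$ with $1\le m\le m_0$. This $m_0$ works. Indeed, fix $x=[x_\eps]\in V$ and $m=[m_\eps]\in\ster\N$ with $1\le m\le m_0$; then $\phi_{m,x}\in\ster(\Cnt[\infty](\R^d))$, with $\eps$-th component $\phi_0((\,\cdot\,-x_\eps)/\eps^{1/m_\eps})$ for small $\eps$. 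By the chain rule $\partial^\alpha\phi_{m,x}(y)=\caninf^{-\abs\alpha/m}(\partial^\alpha\phi_0)\bigl((y-x)/\caninf^{1/m}\bigr)$, whence (using $m\ge1$) $\abs{\partial^\alpha\phi_{m,x}(y)}\le\caninf^{-\abs\alpha}C_\alpha\in\ster\C_\Mod$ for all $\alpha\in\N^d$ and all $y\in\ster\R^d$, where $C_\alpha:=\sup_{z}\abs{\partial^\alpha\phi_0(z)}\in\R$; thus $\phi_{m,x}\in\Mod(\ster\R^d)$. As for the support: for small $\eps$ one has $x_\eps\in K_n$ and $\eps^{1/m_\eps}\le\delta$, so $\supp\bigl(\phi_0((\,\cdot\,-x_\eps)/\eps^{1/m_\eps})\bigr)=x_\eps+\eps^{1/m_\eps}\supp\phi_0\subseteq\overline{B(x_\eps,\delta)}\subseteq K_{n+1}$; hence $\ster\supp(\phi_{m,x})\subseteq\ster K_{n+1}$, and since $K_{n+1}\csub\Omega$ this shows $\phi_{m,x}$ is compactly supported in $\ster\Omega_c$. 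Therefore $\phi_{m,x}\in\Mod_c(\ster\Omega_c)$.

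I expect the one genuinely substantive point to be organizational rather than computational: ``compactly supported in $\ster\Omega_c$'' requires a \emph{single} internal compact set containing the support, and it is precisely the combination of Lemma~\ref{lemma-internal-subset-of-union} (which collapses the external union $\ster\Omega_c=\bigcup_n\ster K_n$ down to one $\ster K_n$, on the internal set $V$) with the transfer-based thickening that makes the resulting $m_0$ uniform over all $x\in V$. The moderateness bound on $\phi_{m,x}$ and the $\eps$-wise identification of its support are then routine.
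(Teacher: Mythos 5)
Your proof is correct and follows essentially the same route as the paper: both reduce $V$ to a single $\ster K$ via Lemma~\ref{lemma-internal-subset-of-union} and then choose $m_0\in\ster\N_\infty$ so that $\caninf^{1/m_0}$ is below a standard distance from $K$ to the complement of $\Omega$ (the paper uses $\frac12 d(K,\R^d\setminus\Omega)$ where you use a thickened member $K_{n+1}$ of an exhaustion). Your extra details --- the transfer-based thickening, the overspill choice of $m_0$, and the explicit moderateness and support checks --- are just the routine verifications the paper leaves implicit.
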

\begin{proof}
By Lemma \ref{lemma-internal-subset-of-union}, $V\subseteq\ster K$ for some compact $K\subset \Omega$. Thus we can choose $m_0\in\ster\N_\infty$ s.t.\ $\caninf^{1/m_0}\le \frac12 d(K,\R^d\setminus\Omega)\in\R_{>0}$.
\end{proof}

We first prove the following uniform version of Proposition \ref{def-regular-with-cutoff}:
\begin{theorem}\label{thm-projection-of-wave-front-2}
Let $V\subseteq\ster\Omega_c$ be internal and let $\Gamma\subseteq \ster\R^d$ be an internal cone. Let $u \in \Mod(\ster\Omega_c)$ be $\Mod^\infty$-microlocally regular at $(x_0,\xi_0)$, for each $x_0\in V$ and each $\xi_0\in\ster\Sphere\cap\Gamma$. Then there exists $k\in\ster\N_\infty$ such that
$\fourier{\phi_{k,x} u}(\xi)\approxeq 0$ for each $x\in V$ and $\xi\in\Gamma\cap\ster\R^d_{fs}$.
\end{theorem}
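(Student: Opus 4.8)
The plan is to combine the pointwise equivalence $(1)\Leftrightarrow(4)$ from Proposition~\ref{def-regular-with-cutoff} with a double application of Quantifier switching to make the choice of $m\in\ster\N_\infty$ uniform both in $x\in V$ and in $\xi_0\in\ster\Sphere\cap\Gamma$. First, by the preceding Lemma, fix $m_0\in\ster\N_\infty$ such that $\phi_{m,x}\in\Mod_c(\ster\Omega_c)$ for every $x\in V$ and every $m\le m_0$; from now on all indices $m$ are silently taken $\le m_0$. For each fixed $x_0\in V$ and $\xi_0\in\ster\Sphere\cap\Gamma$, Proposition~\ref{def-regular-with-cutoff}$(4)$ gives some $m\in\ster\N_\infty$ with $\fourier{\phi_{m,x_0}u}(\xi)\approxeq 0$ for all $\xi\in\ster\R^d_{fs}$ with $\frac{\xi}{\abs\xi}\approx_\fast\xi_0$. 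The external conditions ``$m\in\ster\N_\infty$'', ``$\xi\in\ster\R^d_{fs}$'', ``$\frac{\xi}{\abs\xi}\approx_\fast\xi_0$'' and ``$\approxeq 0$'' must all be replaced by internal quantified statements before switching can be applied; this bookkeeping is the technical heart of the argument, exactly as in the proof of Proposition~\ref{M-infty-kar}.

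Concretely, I would unwind the infinitesimal conditions as follows. The relation $\fourier{\phi_{m,x}u}(\xi)\approxeq 0$ on the (fast-scale, conic) set in question becomes: for each $p\in\N$ there is $q\in\N$ such that $\abs[\big]{\fourier{\phi_{m,x}u}(\xi)}\le\caninf^{p}$ whenever $\xi\in\Gamma$, $\abs\xi\ge\caninf^{-1/q}$ and $\abs[\big]{\frac{\xi}{\abs\xi}-\xi_0}\le\caninf^{1/q}$; similarly $m\in\ster\N_\infty$ reads ``$m\ge n$'' for each $n\in\N$. Thus the hypothesis, for a fixed $x$, has the form: for each $n\in\N$ there is an internal $m$ (which we may take $\ge n$) such that $P_p(m)\,\&\,\lnot Q_q(m)$-type internal statements hold for all $p$, with $q$ depending on $p$ — precisely the setup to feed into Quantifier switching over the internal index set $\{m\in\ster\N: n\le m\le m_0\}$, yielding a single $m\in\ster\N_\infty$ that works for all $\xi_0\in\ster\Sphere\cap\Gamma$ simultaneously (one first eliminates the dependence on $\xi_0$ by noting that the relevant bound, being internal, can be demanded uniformly over the internal set $\ster\Sphere\cap\Gamma$, i.e.\ one applies transfer/IDP to push the $\forall\xi_0$ inside). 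One then repeats the switching to also absorb the $\forall x\in V$: the statement ``there is $m\ge n$ valid for all $\xi_0$ and the fixed $x$'' is internal in $x$, so Quantifier switching over $x\in V$ (again with the strengthening family $P_n(x):=$``a common $m\ge n$ exists'') produces a single $k\in\ster\N_\infty$ doing the job for all $x\in V$ and all $\xi_0\in\ster\Sphere\cap\Gamma$ at once.

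It remains to pass from ``$\frac{\xi}{\abs\xi}\approx_\fast\xi_0$ for some $\xi_0\in\ster\Sphere\cap\Gamma$'' to the conclusion ``$\xi\in\Gamma\cap\ster\R^d_{fs}$'': given such a $\xi$, its normalization $\xi_0:=\frac{\xi}{\abs\xi}$ lies in $\ster\Sphere$, and since $\Gamma$ is a cone it lies in $\Gamma$; trivially $\xi_0\approx_\fast\xi_0$, so the uniform estimate obtained above applies and gives $\fourier{\phi_{k,x}u}(\xi)\approxeq 0$. Conversely every $\xi\in\Gamma\cap\ster\R^d_{fs}$ is covered, which is exactly the claimed statement. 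I expect the main obstacle to be the careful translation of the nested ``$\exists m$ (for all $p$ there is $q$)'' hypothesis into a form with a single strengthening family $P_n$ and a single negated family $Q_n$ as required by the Quantifier switching theorem — i.e.\ diagonalizing the $p,q$ indices against the $n$-index of $m$, and checking the side condition that the relevant sets are nonempty so that [F5] applies — but this is the same maneuver already used in Proposition~\ref{M-infty-kar} and Lemma~\ref{lemma-fourier}(6), and should go through with the same $\caninf$-power estimates on $\fourier{\phi_{k,x}u}$ coming from $\phi_{k,x}u\in\Mod_\Schwartz$.
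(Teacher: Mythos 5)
The overall skeleton of your plan (reduce to Proposition \ref{def-regular-with-cutoff}(4), make the cutoff index uniform, then recover arbitrary $\xi\in\Gamma\cap\ster\R^d_{fs}$ by taking $\xi_0:=\frac{\xi}{\abs\xi}$, which lies in $\ster\Sphere\cap\Gamma$ since $\Gamma$ is a cone) is reasonable, and that last normalization step is fine. But the central step has a genuine gap: to apply Quantifier switching you must first know, for each \emph{finite} index $n$, that a \emph{single} $m$ yields the internal bound uniformly over all $\xi_0\in\ster\Sphere\cap\Gamma$ (and, in the second round, over all $x\in V$). The hypothesis only provides, for each pair $(x_0,\xi_0)$, its own $m(x_0,\xi_0)\in\ster\N_\infty$, and your justification for the exchange --- that ``the relevant bound, being internal, can be demanded uniformly over the internal set $\ster\Sphere\cap\Gamma$'' by transfer/IDP --- is not a valid inference: neither Transfer nor the Internal Definition Principle converts $(\forall\xi_0)(\exists m)$ into $(\exists m)(\forall\xi_0)$, and saturation-type principles only handle countably many conditions, whereas $\xi_0$ ranges over an uncountable internal set. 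The same objection hits the second application ``over $x\in V$'': Quantifier switching produces one element of an internal set satisfying countably many internal conditions; it does not upgrade a pointwise statement to a uniform one over all of $V$. In effect, the uniform existence you would need in order to verify the hypothesis of Quantifier switching is essentially the statement of the theorem itself, so the proposal is circular at its core.

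What makes the theorem cheap is a decoupling trick that your proposal is missing: one never makes $m$ uniform. The paper's proof observes that for each finite $k$ the statement ``for all $x_0\in V$ and $\xi_0\in\ster\Sphere\cap\Gamma$ there exists $m$ with $k<m\le m_0$ such that $\abs[\big]{\fourier{\phi_{m,x_0}u}(\xi)}\le\caninf^{k}$ whenever $\abs[\big]{\frac{\xi}{\abs\xi}-\xi_0}\le\caninf^{1/k}$ and $\abs{\xi}\ge\caninf^{-1/k}$'' is internal and true (any $m$ as in Proposition \ref{def-regular-with-cutoff}(4) works), and that $\phi_{k,0}\phi_{m,0}=\phi_{k,0}$ for all $m>k$. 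A single overspill in $k$, with the $\exists m$ kept \emph{inside} the $\forall x_0\,\forall\xi_0$ so that no quantifier exchange is ever needed, gives one infinitely large $k$ for which both hold; then, since $\phi_{k,x_0}\in\Mod^\infty(\ster\Omega_c)$ and $\phi_{k,x_0}u=\phi_{k,x_0}(\phi_{m,x_0}u)$, Proposition \ref{cut-off} transfers the negligibility from the $(x_0,\xi_0)$-dependent function $\phi_{m,x_0}u$ to $\phi_{k,x_0}u$, and the uniformity in $(x_0,\xi_0)$ comes for free. If you want to rescue your route, you would first have to prove the uniform internal bound for each finite $n$ by exactly this nesting/cut-off argument --- at which point the Quantifier-switching machinery is no longer needed and you are back to the paper's proof.
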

\begin{proof}
Let $m_0\in\ster\N_\infty$ as in the previous lemma.

(1) For each $k\in\N$,  $x_0\in V$ and $\xi_0\in\ster\Sphere\cap \Gamma$, there exists $m\in\ster\N$ with $k<m\le m_0$ s.t.
\[\abs[]{\fourier{\phi_{m,x_0} u}(\xi)}\le \rho^{k},\quad \forall \xi\in\ster\R^d, \abs{\frac\xi{\abs\xi}-\xi_0}\le\rho^{1/k},\, \abs\xi\ge\rho^{-1/k}\]
since every $m\in\ster\N_\infty$ as in Prop.\ \ref{def-regular-with-cutoff}(4) satisfies this condition ($m$ depends on $x_0$, $\xi_0$).

\item (2) For each $k\in\N$, $\phi_{k,0}\phi_{m,0}=\phi_{k,0}$,  for each $m\in\ster\N$ with $m>k$.

\item By overspill, (1) and (2) simultaneously hold for some $k\in\ster\N_\infty$ ($k$ does not depend on $x_0$, $\xi_0$). Then in particular $\fourier{\phi_{m,x_0} u}(\xi)\approxeq 0$, for each $\xi\in\ster\R^d_{fs}$ with $\frac\xi{\abs\xi}\approx_\fast\xi_0$. Since $\phi_{k,x_0}\in\Mod^\infty(\ster\Omega_c)$, Prop.\ \ref{cut-off} shows that also $\fourier{\phi_{k,x_0} u}(\xi)=\Fourier\bigl(\phi_{k,x_0}(\phi_{m,x_0} u)\bigr)(\xi)\approxeq 0$ for each $\xi\in\ster\R^d_{fs}$ with $\frac\xi{\abs\xi}\approx_\fast\xi_0$. As $x_0\in V$ and $\xi_0\in \ster \Sphere\cap \Gamma$ are arbitrary, the result follows.
\end{proof}
\begin{remark}
The previous proof indicates the need to go beyond the ring $\GenR$ of generalized Colombeau numbers. Although one can also formulate an overspill principle in this context \cite{HVInternal}, one cannot distinguish between $\caninf^k$ ($k$ infinitely large) and $0$ in $\GenR$.
\end{remark}

\begin{theorem}\label{proj-of-WF-is-singular-support}
Let $x_0\in\ster\Omega_c$. For $u\in\Mod(\ster\Omega_c)$, the following are equivalent:
\begin{enumerate}
\item $u$ is $\Mod^\infty$-microlocally regular at $(x_0,\xi_0)$, for each $\xi_0\in\ster\Sphere$
\item $u\in\Mod^\infty(V)$ for some slow scale neighbourhood $V$ of $x_0$.
\end{enumerate}
\end{theorem}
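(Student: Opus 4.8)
The plan is to prove the two implications separately, using Theorem~\ref{thm-projection-of-wave-front-2} (with a singleton $V$ and the full cone $\Gamma=\ster\R^d$) for $(1)\implies(2)$, and Lemma~\ref{lemma-fourier} together with Proposition~\ref{def-regular-with-cutoff} for $(2)\implies(1)$.

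For $(2)\implies(1)$: suppose $\{x:\abs{x-x_0}\le r\}\subseteq V$ with $r\approx_\slow 0$ and $u\in\Mod^\infty(V)$. First I would pick, by Overspill, $m\in\ster\N_\infty$ with $\phi_{m,x_0}\in\Mod^\infty_c(\ster\Omega_c)$ (as in the proof of Proposition~\ref{def-regular-with-cutoff}) and $\caninf^{1/m}\le r$ (possible since $\caninf^{1/m}\le r$ holds for every $m\in\N$, as $r\approx_\slow 0$). Then $\phi_{m,x_0}(x)=1$ for every $x\approx_\fast x_0$ (because $\caninf^a\le\tfrac12\caninf^{1/m}$ for each $a\in\R_{>0}$), and $\supp(\phi_{m,x_0}u)\subseteq\{x:\abs{x-x_0}\le r\}\subseteq V$, so the Leibniz rule together with $u\in\Mod^\infty(V)$ gives $\phi_{m,x_0}u\in\Mod^\infty_c(\ster\R^d_\Mod)$. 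By Lemma~\ref{lemma-fourier}(5), $\fourier{\phi_{m,x_0}u}(\xi)\approxeq 0$ for every $\xi\in\ster\R^d_{fs}$, so $\phi:=\phi_{m,x_0}$ witnesses condition~(2) of Proposition~\ref{def-regular-with-cutoff} simultaneously for every $\xi_0\in\ster\Sphere$; hence $u$ is $\Mod^\infty$-microlocally regular at $(x_0,\xi_0)$ for all $\xi_0\in\ster\Sphere$.

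For $(1)\implies(2)$: I would apply Theorem~\ref{thm-projection-of-wave-front-2} with the internal set $V:=\{x_0\}$ and the internal cone $\Gamma:=\ster\R^d$; its hypothesis is exactly statement~(1), and its conclusion provides $k\in\ster\N_\infty$ with $\fourier{\phi_{k,x_0}u}(\xi)\approxeq 0$ for all $\xi\in\ster\R^d_{fs}$. Since $\phi_{k,x_0}u\in\Mod_c(\ster\R^d_\Mod)\subseteq\Mod_\Schwartz(\ster\R^d)$ (Lemma~\ref{lemma-fourier}(1)), also $w:=\fourier{\phi_{k,x_0}u}\in\Mod_\Schwartz(\ster\R^d)$ (Lemma~\ref{lemma-fourier}(2)) with $w(\xi)\approxeq 0$ on $\ster\R^d_{fs}$, so $\fourier w\in\Mod^\infty(\ster\R^d)$ by Lemma~\ref{lemma-fourier}(6). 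By Fourier inversion $\fourier w$ equals $\phi_{k,x_0}u$ up to reflection and a moderate invertible constant, hence $\phi_{k,x_0}u\in\Mod^\infty(\ster\R^d)$; as $\phi_{k,x_0}\equiv 1$ on the open ball $B(x_0,\tfrac12\caninf^{1/k})$, $u$ coincides with $\phi_{k,x_0}u$ there, so $u\in\Mod^\infty\bigl(B(x_0,\tfrac12\caninf^{1/k})\bigr)$. It then remains to exhibit a slow scale infinitesimal $r_0$ with $\{x:\abs{x-x_0}\le r_0\}\subseteq B(x_0,\tfrac12\caninf^{1/k})$; this is possible because $\caninf^{1/k}\ge\caninf^a$ for every $a\in\R_{>0}$ (for instance one gets such $r_0$ by Quantifier switching from the internal conditions $\caninf^{1/n}\le r_0\le\min(1/n,\tfrac13\caninf^{1/k})$, $n\in\N$), and then $V:=\{x:\abs{x-x_0}\le r_0\}$ is a slow scale neighbourhood of $x_0$ with $u\in\Mod^\infty(V)$.

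The moderateness, support and $\Mod^\infty_c$ verifications are routine bookkeeping. The genuinely delicate point is the final step of $(1)\implies(2)$: the $k$ delivered by Theorem~\ref{thm-projection-of-wave-front-2} is only known to be \emph{infinitely} large, so $\caninf^{1/k}$ need not itself be a slow scale infinitesimal; what rescues the argument is that $\caninf^{1/k}$ nonetheless dominates every $\caninf^a$ ($a\in\R_{>0}$) because $1/k$ is infinitesimal --- precisely the distinction between an infinitely large $k$ and a merely large one that, as the remark following Theorem~\ref{thm-projection-of-wave-front-2} emphasizes, is invisible in $\GenR$.
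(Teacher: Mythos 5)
Your proof is correct and follows essentially the same route as the paper: Theorem~\ref{thm-projection-of-wave-front-2} with $V=\{x_0\}$, $\Gamma=\ster\R^d$ plus Lemma~\ref{lemma-fourier} and Fourier inversion for $(1)\implies(2)$, and $\phi=\phi_{m,x_0}$ for a sufficiently small $m\in\ster\N_\infty$ together with Lemma~\ref{lemma-fourier}(5) and Proposition~\ref{def-regular-with-cutoff} for $(2)\implies(1)$. The extra care you take with the slow scale radius (that $\caninf^{1/k}$ dominates every $\caninf^a$ even though it need not itself be a slow scale infinitesimal) is exactly the point the paper leaves implicit in the phrase ``$\phi_{k,x_0}=1$ on a slow scale neighbourhood $V$ of $x_0$''.
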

\begin{proof}
$(1)\Rightarrow(2)$: by Theorem \ref{thm-projection-of-wave-front-2} (with $V:=\{x_0\}$ and $\Gamma:= \ster\R^d$), we find $k\in\ster\N_\infty$ such that $\fourier{\phi_{k,x_0} u}(\xi)\approxeq 0$ for each $\xi\in\ster\R^d_{fs}$, and $\phi_{k,x_0}\in\Mod_c(\ster\Omega_c)$. Hence $\phi_{k,x_0} u\in\Mod_c(\ster\Omega_c)\subseteq\Mod_\Schwartz(\ster\R^d)$. By Lemma \ref{lemma-fourier}, $\phi_{k,x_0}u\in\Mod^\infty(\ster\R^d)$. As $\phi_{k,x_0}=1$ on a slow scale neighbourhood $V$ of $x_0$, also $u\in \Mod^\infty(V)$.

$(2)\Rightarrow(1)$: there exists $\phi\in\Mod_c^\infty(\ster\Omega_c)$ with $\phi = 1$ on a slow scale neighbourhood of $x_0$ and with $\phi u\in\Mod_c^\infty(\ster\Omega_c)$ (e.g., $\phi=\phi_{m,x_0}$ for a sufficiently small $m\in\ster\N_\infty$). By Lemma \ref{lemma-fourier}, $\fourier{\phi u}(\xi) \approxeq 0$ for each $\xi\in\ster\R^d_{fs}$.
\end{proof}

We can equivalently reformulate the condition in the previous Theorem:
\begin{proposition}
For $u\in\Mod(\ster\Omega_c)$, the following are equivalent:
\begin{enumerate}
\item $u\in\Mod^\infty(V)$ for some slow scale neighbourhood $V$ of $x_0$
\item $(\exists N\in\N)$ $(\forall \alpha\in\N^d)$ $(\forall x\in\ster\R^d, x\approx_\fast x_0)$ $(\abs{\partial^\alpha u(x)}\le\caninf^{-N})$
\end{enumerate}
\end{proposition}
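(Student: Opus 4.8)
The plan is to reduce both implications to the characterization of $\Mod^\infty$ on a countable union of internal sets (Proposition~\ref{M-infty-kar}). The starting observation is that $\{x\in\ster\R^d: x\approx_\fast x_0\}$ equals the countable union $\bigcup_{n\in\N}A_n$ of the internal sets $A_n:=\{x\in\ster\R^d:\abs{x-x_0}\le\caninf^{1/n}\}$, and that any slow scale infinitesimal $r$ satisfies $\caninf^{1/n}\le r$ for every $n\in\N$ (since $\caninf^a\le r$ for every $a\in\R_{>0}$).

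For $(1)\implies(2)$: if $u\in\Mod^\infty(V)$ with $\{x:\abs{x-x_0}\le r\}\subseteq V$ for some $r\approx_\slow 0$, then $\bigcup_n A_n\subseteq\{x:\abs{x-x_0}\le r\}\subseteq V$; the middle set is internal and $u\in\Mod^\infty(\{x:\abs{x-x_0}\le r\})$ because $\Mod^\infty$ is monotone decreasing in its argument. Proposition~\ref{M-infty-kar}, applied with $A:=\bigcup_n A_n$ and this internal set in the role of $B$, then yields $(2)$ verbatim.

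For $(2)\implies(1)$: Proposition~\ref{M-infty-kar} turns $(2)$ into the existence of an internal $B\supseteq\bigcup_n A_n$ with $u\in\Mod^\infty(B)$; by monotonicity it suffices to show that $B$ already contains a ball $\{x:\abs{x-x_0}\le r\}$ with $r\approx_\slow 0$. Since $A_n\subseteq B$ for each $n\in\N$, the internal set $\{n\in\ster\N:(\forall x\in\ster\R^d)(\abs{x-x_0}\le\caninf^{1/n}\implies x\in B)\}$ contains $\N$; intersecting it with the internal set $\{n\in\ster\N: n\le\sqrt{\ln(1/\caninf)}\}$ (which also contains $\N$, as $\sqrt{\ln(1/\caninf)}\in\ster\R_\infty$) and applying overspill, we obtain $\omega\in\ster\N_\infty$ with $\omega\le\sqrt{\ln(1/\caninf)}$ and $\{x:\abs{x-x_0}\le\caninf^{1/\omega}\}\subseteq B$. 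For $r:=\caninf^{1/\omega}$ we have $\caninf^a\le r$ for each $a\in\R_{>0}$ (because $1/\omega\le a$), while $r\le\caninf^{1/\sqrt{\ln(1/\caninf)}}=\exp(-\sqrt{\ln(1/\caninf)})\approx 0$; hence $r\approx_\slow 0$, so $B$ is a slow scale neighbourhood of $x_0$ and $(1)$ holds.

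The two invocations of Proposition~\ref{M-infty-kar} and the monotonicity of $\Mod^\infty$ are routine. The delicate point, and the main obstacle, is the choice of radius in $(2)\implies(1)$: a plain overspill on $\{n:A_n\subseteq B\}$ only produces some $\omega\in\ster\N_\infty$, and then $\caninf^{1/\omega}$ need not be infinitesimal (it is not, for instance, when $\omega$ is as large as $\ln(1/\caninf)$). Restricting the overspill to $\{n\le\sqrt{\ln(1/\caninf)}\}$ — any cut-off that is infinitely large but $o(\ln(1/\caninf))$ serves equally well — is precisely what guarantees that $\caninf^{1/\omega}$ is a genuine slow scale infinitesimal rather than merely bounded below by one.
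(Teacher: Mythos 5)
Your proof is correct and takes essentially the same route as the paper: the decomposition of $\{x\in\ster\R^d: x\approx_\fast x_0\}$ into the internal balls $A_n=B(x_0,\caninf^{1/n})$, an appeal to Proposition~\ref{M-infty-kar} in both directions, and overspill. The only deviation is your restricted overspill (the cut-off $n\le\sqrt{\ln(1/\caninf)}$) in $(2)\Rightarrow(1)$, which makes explicit a point the paper's one-line proof leaves implicit; it could also be handled without restricting the overspill, since for any infinitely large $m$ the ball $A_m=B(x_0,\caninf^{1/m})$ already contains the ball of radius $\min(\caninf^{1/m},s)\approx_\slow 0$ for any fixed $s\approx_\slow 0$, so $A_m$ is itself a slow scale neighbourhood of $x_0$.
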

\begin{proof}
For $n\in\ster\N$, let $A_n:=B(x_0,\caninf^{1/n})$. Then $A_n$ is internal and $\{x\in\ster\R^d: x\approx_\fast x_0\}=\bigcup_{n\in\N} A_n$. Each slow scale neighbourhood of $x_0$ and each internal set containing $\{x\in\ster\R^d: x\approx_\fast x_0\}$ contains $A_m$ for some $m\in\ster\N_\infty$ by overspill. Thus the result follows by Prop.\ \ref{M-infty-kar}.
\end{proof}

\section{Connection with $\Gen^\infty$-microlocal regularity}
\begin{definition}
We denote $\mathcal E(\R^d):= \{A\subseteq\R^d: A$ is finite, $A\ne \emptyset\}$. Elements of $\ster(\mathcal E(\R^d))$ are called hyperfinite subsets of $\ster\R^d$. Considering the number of elements as a map $\#$: $\mathcal E(\R^d)\to \N$, we call $\ster\#(A)\in\ster\N$ the number of elements of $A\in\ster(\mathcal E(\R^d))$.

Similarly, we can extend other operations to hyperfinite sets, e.g., for $A\in\ster(\mathcal E(\R^d))$ and $u$: $\ster\R^d\to\ster\C$ internal, $\ster\sum_{x\in A} u(x)\in\ster\C$, where we consider $\sum$: $\mathcal E(\R^d)\times \{u: \R^d\to\C\}\to \C$. As the sum is one of the most basic operations, we will write $\sum:=\ster\sum$. The usual calculation rules hold by transfer. (For $A=[A_\eps]$, $A$ is hyperfinite iff $A_\eps$ is finite for small $\eps$, and $\# A = [\# A_\eps]$. For $u=[u_\eps]$, $\sum_{x\in A} u(x) = [\sum_{x\in A_\eps} u_\eps(x)]$, \dots)
\end{definition}

\begin{theorem}\label{consistency}
Let $u\in\Mod(\ster\Omega_c)$, $x_0\in\Omega$ and $\xi_0\in\Sphere$. Then the following are equivalent:
\begin{enumerate}
\item there exists $r\in\R_{>0}$ such that $u$ is $\Mod^\infty$-microlocally regular at $(x,\xi)$ for each $x\in B_{\ster\R^d}(x_0, r)$ and each $\xi\in\ster \Sphere$ with $\abs{\xi-\xi_0}\le r$
\item there exist $\phi\in\Mod_c^\infty(\ster\Omega_c)$, $r\in\R_{>0}$ and $R\in\ster\R_{ss}$ such that $\abs{\partial^\alpha\phi(x)}\le R$ and $\abs{\phi(x)}\ge 1/R$ for each $\alpha\in\N^d$ and each $x\in B_{\ster\R^d}(x_0, r)$, and
\[
\fourier{\phi u}(\xi)\approxeq 0,\quad \forall \xi\in\ster\R^d_{fs}, \abs{\frac{\xi}{\abs\xi}-\xi_0}\le r
\]
\item there exist $\psi\in\test(\Omega)$ with $\psi(x_0)=1$ and a conic neighbourhood $\Gamma\subseteq\R^d$ of $\xi_0$ s.t.
\[
\fourier{\psi u}(\xi)\approxeq 0,\quad \forall \xi\in\ster\Gamma\cap\ster\R^d_{fs}.
\] 
\end{enumerate}
\end{theorem}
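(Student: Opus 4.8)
The plan is to prove the cycle $(1)\Rightarrow(2)\Rightarrow(3)\Rightarrow(1)$; the engine for the first implication is Theorem~\ref{thm-projection-of-wave-front-2}, and for the other two it is Proposition~\ref{cut-off} together with Proposition~\ref{def-regular-with-cutoff}. Two observations are used repeatedly. \emph{(a) A conic form of Proposition~\ref{cut-off}}: if $v\in\Mod_c(\ster\Omega_c)$, $\chi\in\Mod^\infty(\ster\Omega_c)$, $\zeta_0\in\ster\Sphere$, $s\in\R_{>0}$ and $\fourier v(\eta)\approxeq0$ for all $\eta\in\ster\R^d_{fs}$ with $\abs{\eta/\abs{\eta}-\zeta_0}\le s$, then $\fourier{\chi v}(\xi)\approxeq0$ for all $\xi\in\ster\R^d_{fs}$ with $\abs{\xi/\abs{\xi}-\zeta_0}\le s'$ whenever $s'<s$; this follows at once by applying Proposition~\ref{cut-off} to each such $\xi$ with the generalized base direction $\xi/\abs{\xi}$, since $\eta/\abs{\eta}\approx_\fast\xi/\abs{\xi}$ forces $\abs{\eta/\abs{\eta}-\zeta_0}\le\abs{\eta/\abs{\eta}-\xi/\abs{\xi}}+s'<s$. \emph{(b)} If $\theta$ is an internal smooth function supported in a fixed standard compact subset of $\Omega$ all of whose derivatives of standard order are slow scale (e.g.\ a product, or away from its zeros a quotient, of standard test functions and of functions $\phi_{m,x}$, $m\in\ster\N_\infty$), then $\theta\in\Mod^\infty_c(\ster\Omega_c)$: a slow scale number is $\le\caninf^{-1}$, so the defining estimate of $\Mod^\infty$ holds with $N=1$.

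$(1)\Rightarrow(2)$. Choose $r'\in(0,r)$ close enough to $r$, set $V:=\ster\overline{B(x_0,r')}$ and $\Gamma:=\ster\{\eta\ne0:\abs{\eta/\abs{\eta}-\xi_0}\le r'/2\}$. By~(1), $u$ is $\Mod^\infty$-microlocally regular at every $(x,\xi)\in V\times(\ster\Sphere\cap\Gamma)$, so Theorem~\ref{thm-projection-of-wave-front-2} yields $k\in\ster\N_\infty$ with $\phi_{k,z}\in\Mod_c(\ster\Omega_c)$ for $z\in V$ and $\fourier{\phi_{k,z}u}(\xi)\approxeq0$ for all $z\in V$, $\xi\in\Gamma\cap\ster\R^d_{fs}$; choosing $k$ smaller if necessary (which by its proof does not affect the conclusion of Theorem~\ref{thm-projection-of-wave-front-2}) we may also assume $\caninf^{1/k}<r'/4$. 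Put
\[\phi(x):=\bigl(\caninf^{d/k}\norm{\phi_0}_{L^1}\bigr)^{-1}\int_{B_{\ster\R^d}(x_0,r'/2)}\phi_{k,z}(x)\,dz,\]
a smoothed indicator of $B_{\ster\R^d}(x_0,r'/2)$: it equals $1$ on $B_{\ster\R^d}(x_0,r'/4)$, is supported in $B_{\ster\R^d}(x_0,3r'/4)\csub\Omega$, lies in $\Mod^\infty_c(\ster\Omega_c)$ by~(b), and on $B_{\ster\R^d}(x_0,r'/4)$ is constant, so the estimates of~(2) hold there with $R:=1$. By Fubini, $\fourier{\phi u}(\xi)=\bigl(\caninf^{d/k}\norm{\phi_0}_{L^1}\bigr)^{-1}\int_{B(x_0,r'/2)}\fourier{\phi_{k,z}u}(\xi)\,dz$. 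Fix $\xi\in\ster\R^d_{fs}$ with $\abs{\xi/\abs{\xi}-\xi_0}\le r'/4$; then $\xi\in\Gamma\cap\ster\R^d_{fs}$, so $\fourier{\phi_{k,z}u}(\xi)\approxeq0$ for each $z\in V$, and by the Internal Definition Principle the internal set $\{z\in V:\abs{\fourier{\phi_{k,z}u}(\xi)}\le\caninf^n\}$ equals $V$ for every $n\in\N$, whence $\sup_{z\in V}\abs{\fourier{\phi_{k,z}u}(\xi)}\approxeq0$. Since $\caninf^{-d/k}$ is slow scale and $B(x_0,r'/2)$ has finite volume, $\fourier{\phi u}(\xi)\approxeq0$; thus~(2) holds with radius $r'/4$.

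$(2)\Rightarrow(3)$. Pick $\psi\in\test(\Omega)$ with $\psi(x_0)=1$ and $\supp\psi\subseteq B(x_0,r)$. Then $\psi/\phi$, extended by $0$, is a well-defined internal smooth function whose standard-order derivatives are (by the Leibniz and quotient rules) bounded by standard constants times powers of $R$, hence slow scale, so $\psi/\phi\in\Mod^\infty_c(\ster\Omega_c)$ by~(b). As $\phi u\in\Mod_c(\ster\Omega_c)$, $\psi u=(\psi/\phi)(\phi u)$, and $\fourier{\phi u}(\eta)\approxeq0$ for all $\eta\in\ster\R^d_{fs}$ with $\abs{\eta/\abs{\eta}-\xi_0}\le r$, observation~(a) gives $\fourier{\psi u}(\xi)\approxeq0$ for all $\xi\in\ster\R^d_{fs}$ with $\abs{\xi/\abs{\xi}-\xi_0}<r$; with $\Gamma:=\{0\}\cup\{\eta\ne0:\abs{\eta/\abs{\eta}-\xi_0}<r\}$ this is~(3). $(3)\Rightarrow(1)$. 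Since $\psi(x_0)=1$ and $\Gamma$ is an open conic neighbourhood of $\xi_0$, there is $\tilde r\in\R_{>0}$ with $\psi\ge\tfrac12$ on $B(x_0,2\tilde r)$ and $\{\eta\ne0:\abs{\eta/\abs{\eta}-\xi_0}<2\tilde r\}\subseteq\Gamma$. Fix $x\in B_{\ster\R^d}(x_0,\tilde r)$, $\xi_1\in\ster\Sphere$ with $\abs{\xi_1-\xi_0}\le\tilde r$, and $m\in\ster\N_\infty$ small enough that $\phi_{m,x}\in\Mod_c(\ster\Omega_c)$ and $\caninf^{1/m}<\tilde r$; then $\supp\phi_{m,x}\subseteq B(x_0,2\tilde r)$, so $\phi_{m,x}/\psi\in\Mod^\infty_c(\ster\Omega_c)$ by~(b). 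As $\psi u\in\Mod_c(\ster\Omega_c)$, $\phi_{m,x}u=(\phi_{m,x}/\psi)(\psi u)$, and $\fourier{\psi u}(\eta)\approxeq0$ for all $\eta\in\ster\R^d_{fs}$ with $\eta/\abs{\eta}\approx_\fast\xi_1$ (such $\eta$ lie in $\ster\Gamma\cap\ster\R^d_{fs}$), Proposition~\ref{cut-off} with base direction $\xi_1$ gives $\fourier{\phi_{m,x}u}(\xi)\approxeq0$ for all $\xi\in\ster\R^d_{fs}$ with $\xi/\abs{\xi}\approx_\fast\xi_1$. By the implication $(4)\Rightarrow(1)$ of Proposition~\ref{def-regular-with-cutoff}, $u$ is $\Mod^\infty$-microlocally regular at $(x,\xi_1)$; as $(x,\xi_1)$ was arbitrary in the indicated range, this is~(1) with radius $\tilde r$.

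The step I expect to be hardest is $(1)\Rightarrow(2)$: Theorem~\ref{thm-projection-of-wave-front-2} only provides cut-offs localized at the infinitesimal scale $\caninf^{1/k}$, whereas~(2) demands a single cut-off that is identically $1$ on a \emph{standard} ball; the averaging formula above manufactures such a cut-off, and the one genuinely nonstandard point is the passage from ``$\fourier{\phi_{k,z}u}(\xi)\approxeq0$ for each $z$'' to the uniform statement ``$\sup_{z}\abs{\fourier{\phi_{k,z}u}(\xi)}\approxeq0$'', which is exactly what the Internal Definition Principle delivers. Once observation~(a) and the membership claims of~(b) are in place, $(2)\Rightarrow(3)$ and $(3)\Rightarrow(1)$ are routine.
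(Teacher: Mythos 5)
Your proof is correct and follows the paper's skeleton: the same cycle $(1)\Rightarrow(2)\Rightarrow(3)\Rightarrow(1)$, with Theorem \ref{thm-projection-of-wave-front-2} as the engine for $(1)\Rightarrow(2)$ and Propositions \ref{cut-off} and \ref{def-regular-with-cutoff} doing the rest. The one genuine difference is the gluing step in $(1)\Rightarrow(2)$: the paper superposes the infinitesimal cut-offs over a hyperfinite grid, $\psi:=\sum_{y\in G}\phi_{k,y}$ (which is precisely why that section introduces hyperfinite sets), whereas you average continuously, $\phi=c^{-1}\int_B\phi_{k,z}\,dz$. Your variant avoids the hyperfinite machinery altogether and even yields $\phi\equiv 1$ on a standard ball, so the lower bound in (2) holds with $R=1$; the paper's grid sum only gives $\psi\ge 1$ there and needs the slow scale bound $R=\caninf^{-1/\sqrt k}$. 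Both versions hinge on the same uniformization: passing from ``$\fourier{\phi_{k,z}u}(\xi)\approxeq 0$ for each $z$'' to a bound uniform over the internal index set (your $\sup_{z\in V}$, the paper's estimate $\#G\cdot\caninf^{n}$ for the hyperfinite sum). Note that this step is not really the Internal Definition Principle: that $\{z\in V:\abs{\fourier{\phi_{k,z}u}(\xi)}\le\caninf^n\}$ equals $V$ is just the pointwise hypothesis restated; what yields $\sup_{z\in V}\abs{\fourier{\phi_{k,z}u}(\xi)}\le\caninf^n$ is that, by transfer, this supremum over the internal ($\eps$-wise compact) set $V$ is attained at an internal point of $V$, to which the pointwise hypothesis applies. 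Your observation (a) --- applying Proposition \ref{cut-off} with the variable base direction $\xi/\abs{\xi}$ --- is a welcome explicit rendering of a step the paper leaves implicit in its $(2)\Rightarrow(3)$. One small slip there: as stated, (a) only covers directions at distance $\le s'$ from $\xi_0$ for standard $s'<r$, so it does not reach every $\xi\in\ster\Gamma\cap\ster\R^d_{fs}$ for your aperture-$r$ cone (an internal direction at distance $r-\caninf$ from $\xi_0$ lies in $\ster\Gamma$ but within no standard $s'<r$); since (3) only asks for \emph{some} conic neighbourhood, simply take the cone of aperture $r/2$, for which the triangle-inequality estimate in (a) applies verbatim. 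With that cosmetic fix, all three implications are sound.
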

\begin{proof}
$(1)\Rightarrow (2)$: Let $V:= B_{\ster\R^d}(x_0,r)$ and $\Gamma:=\{\xi\in\ster\R^d: \abs[\big]{\xi - \abs\xi \xi_0}\le \abs\xi r\}$. By Theorem \ref{thm-projection-of-wave-front-2}, we find $k\in\ster\N_\infty$ such that $\fourier{\phi_{k,x} u}(\xi)\approxeq 0$ for each $x\in V$ and $\xi\in\Gamma\cap\ster\R^d_{fs}$. For convenience, we use the norm $\norm{x}_\infty:= \max\{x_1,\dots,x_d\}$ on $\R^d$ (and its extension to a map $\ster\R^d\to\ster\R$). Consider a grid
\[G:=\Bigl\{x\in\ster\R^d: \norm{x-x_0}_\infty< \frac{r}{\sqrt d}, \ x\in \frac{\caninf^{1/k}}{\sqrt d}\ster\Z^d\Bigr\}.\]
Then $G$ is a hyperfinite set with at most $(2r\caninf^{-1/k}+1)^d\le \caninf^{-(d+1)/k}$ elements. Let $\psi:= \sum_{y\in G}\phi_{k,y}$. Let $\xi\in \Gamma\cap\ster\R^d_{fs}$. As $G\subseteq V$, $\abs[\big]{\fourier{\psi u}(\xi)}\le \sum_{y\in G} \abs[]{\fourier{\phi_{k,y}u}(\xi)}\le \caninf^{n-(d+1)/k}$ for each $n\in\N$. Hence $\fourier{\psi u}(\xi)\approxeq 0$.

Now let $x\in W:=\bigl\{x\in\ster\R^d:\norm{x-x_0}_\infty\le \frac r{2\sqrt d}\bigr\}$ arbitrary. Then there exists $y_0\in G$ such that $\norm{x-y_0}_\infty \le \frac{\caninf^{1/k}}{2\sqrt d}$, hence $\psi(x)\ge \phi_{k,y_0}(x) = 1$. On the other hand, let $x\in\ster\Omega_c$. Then there is at most a finite number $C_d\in\N$ (independent of $x$) of elements $y\in G$ for which $\abs{x-y}\le \caninf^{1/k}$. Hence for each $\alpha\in\N^d$,
\[\abs{\partial^\alpha \psi(x)}\le C_d \,\sup\limits_{y\in V} \abs{\partial^\alpha \phi_{k,y}(x)}\le C_{d,\alpha} \caninf^{-\abs{\alpha}/k}\le \caninf^{-1/\sqrt k} =:R\]
($C_{d,\alpha}\in\R$). In particular, $\psi\in\Mod_c^\infty(\ster\Omega_c)$.

$(2)\Rightarrow(3)$: as in Prop.\ \ref{def-regular-with-cutoff}, we can find $\psi\in\test(B(x_0,r))$ with $\psi(x_0)=1$ for which $\frac\psi\phi\in\Mod^\infty_c(\ster\Omega_c)$, and thus $\fourier{\psi u}(\xi)\approxeq 0$ for each $\xi\in\ster{\Gamma}\cap\ster\R^d_{fs}$ (for some conic neighbourhood $\Gamma\subseteq\R^d$ of $\xi_0$) by Prop.\ \ref{cut-off}.

$(3)\Rightarrow(1)$: by Prop.\ \ref{def-regular-with-cutoff}, as $\test(\Omega)\subseteq \Mod^\infty_c(\ster\Omega_c)$.
\end{proof}

The following lemma makes the connection with $\Gen^\infty$- and $\widetilde\Gen^\infty$-microlocal analysis of functions in $\Gen(\Omega)$ \cite{HVMicrolocal}.

For $x\in\ster\R^d$, we denote by $\widetilde x\in\GenR^d$ the equivalence class of $x$ modulo $\approxeq$. Similarly, for $u\in\Mod(\ster\Omega_c)$, we denote by $\widetilde u\in\Gen(\Omega )$ the equivalence class of $u$ modulo $\Null(\ster\Omega_c)$.
\begin{lemma}
Let $u\in\Mod(\ster\Omega_c)$, $x_0\in\ster\Omega_c$ and $\xi_0\in\ster\Sphere$. Then $u$ is $\Mod^\infty$-microlocally regular at $(x_0,\xi_0)$ iff $\widetilde u$ is $\widetilde\Gen^\infty$-microlocally regular at $(\widetilde x_0,\widetilde \xi_0)$.
\end{lemma}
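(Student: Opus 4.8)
The plan is to unwind the definition of $\widetilde\Gen^\infty$-microlocal regularity from \cite{HVMicrolocal} and to recognise it, once translated into the internal-object language of the present paper, as a reformulation of Definition \ref{df-regular}. Concretely, I would first recall that in \cite{HVMicrolocal} the property is stated in terms of (arbitrary) representatives: a net $(u_\eps)_\eps$ of $\widetilde u$, nets representing $x_0$ and $\xi_0$, a net of cutoffs $(\phi_\eps)_\eps$ equal to $1$ on a ball of (generalised) radius around $x_{0,\eps}$, and uniform-in-$\eps$ negligibility estimates on $\fourier{\phi_\eps u_\eps}$ over a (generalised) conic neighbourhood of $\xi_{0,\eps}$. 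Under the correspondence between moderate nets and internal functions (cf.\ \cite{HV-NSPrinciples-tutorial}) this becomes an internal statement about $u=[u_\eps]\in\Mod(\ster\Omega_c)$, with the representatives of $x_0,\xi_0$ becoming points of $\ster\Omega_c$ resp.\ $\ster\Sphere$ (after normalisation). The core of the argument is then to see that the resulting internal statement is exactly the cutoff condition of Proposition \ref{def-regular-with-cutoff}(2) (equivalently Theorem \ref{consistency}(3)) for $u$ at $(x_0,\xi_0)$, i.e.\ that it says precisely that $u$ is $\Mod^\infty$-microlocally regular at $(x_0,\xi_0)$; since the translation is an equivalence, this yields both implications of the lemma at once.

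Before this makes sense I would check that $\Mod^\infty$-microlocal regularity at $(x_0,\xi_0)$ really descends to a property of $(\widetilde u,\widetilde x_0,\widetilde\xi_0)$. For the base point: if $x_0\approxeq x_0'$, then $\abs{x_0-x_0'}\le\caninf^a$ for every $a\in\R_{>0}$, so $\{x\in\ster\R^d:x\approx_\fast x_0\}=\{x\in\ster\R^d:x\approx_\fast x_0'\}$, and likewise $\{\xi:\frac{\xi}{\abs\xi}\approx_\fast\xi_0\}$ is unchanged if $\xi_0$ is replaced by an $\approxeq$-equivalent point of $\ster\Sphere$; hence the conditions in Definition \ref{df-regular} are literally the same. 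For $u$: I would use the cutoff characterisation Proposition \ref{def-regular-with-cutoff}(2). If $u'\in\Mod(\ster\Omega_c)$ with $u-u'\in\Null(\ster\Omega_c)$ and $\phi\in\Mod_c^\infty(\ster\Omega_c)$ witnesses regularity of $u$, then $\phi(u-u')$ is compactly supported in $\ster\R^d_\Mod$ with all derivatives negligible on $\ster\R^d_\Mod$, so $\phi(u-u')\in\Mod_\Schwartz(\ster\R^d)$ by Lemma \ref{lemma-fourier}(1) and $\int\abs{\phi(u-u')}\approxeq 0$ by Lemma \ref{lemma-fourier}(4); therefore $\abs[\big]{\fourier{\phi u'}(\xi)-\fourier{\phi u}(\xi)}\le\int\abs{\phi(u-u')}\approxeq 0$ for every $\xi$, and the same $\phi$ witnesses regularity of $u'$. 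So $\Mod^\infty$-microlocal regularity at $(x_0,\xi_0)$ depends only on $\widetilde u$, $\widetilde x_0$ and $\widetilde\xi_0$, and by the first paragraph it coincides with $\widetilde\Gen^\infty$-microlocal regularity.

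The step I expect to be the main obstacle is the matching of quantifier orders between the two formulations. In \cite{HVMicrolocal} the neighbourhood radii and the order of negligibility enter as existential quantifiers in a fixed order after a choice of representatives, whereas Definition \ref{df-regular} (or Proposition \ref{def-regular-with-cutoff}(2)) first fixes an internal cutoff $\phi$ and only then imposes conditions on the external sets $\{x\approx_\fast x_0\}$ and $\{\xi\in\ster\R^d_{fs}:\frac{\xi}{\abs\xi}\approx_\fast\xi_0\}$. Reconciling these is exactly the uniformisation delivered by overspill and Quantifier switching: one argues, as already in Propositions \ref{def-regular-with-cutoff} and \ref{M-infty-kar} and in Theorem \ref{consistency}, that imposing $\fourier{\phi u}(\xi)\approxeq 0$ on that external conic set is equivalent to imposing, for each $n\in\N$, the internal bound $\abs[\big]{\fourier{\phi u}(\xi)}\le\caninf^n$ on a single internal conic neighbourhood $\{\xi:\abs[\big]{\frac{\xi}{\abs\xi}-\xi_0}\le\caninf^{1/k},\ \abs\xi\ge\caninf^{-1/k}\}$ with $k\in\ster\N_\infty$ sufficiently small, and likewise that the requirement $\phi\equiv 1$ on $\{x\approx_\fast x_0\}$ matches $\phi\equiv 1$ on $B(x_0,\caninf^{1/k})$ for such $k$. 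Once this dictionary is recorded, the equivalence follows with no further idea, and the resulting argument is the announced simplification over the corresponding proof in \cite{HVMicrolocal}.
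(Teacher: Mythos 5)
Your outline has the right general shape (descend to the quotient, then match the two definitions), but the step you dismiss as mere bookkeeping is exactly where the paper's proof has its real content, and your proposal does not contain the idea needed there. The $\widetilde\Gen^\infty$ condition (eq.\ \eqref{eq-phi-tilde}, i.e.\ the definition of \cite{HVMicrolocal} read at the level of representatives) imposes the Fourier estimate only at points of $\GenR^d_{fs}$, that is, at \emph{moderate} $\xi\in\ster\R^d$, since $\GenR$ is the quotient of $\ster\R_\Mod$. Definition \ref{df-regular} (equivalently Prop.\ \ref{def-regular-with-cutoff}) requires $\fourier{\phi u}(\xi)\approxeq 0$ for \emph{all} internal fast scale $\xi$ with $\frac{\xi}{\abs\xi}\approx_\fast\xi_0$, including non-moderate and interleaved points. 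So after ``unwinding'' the two conditions are not literally the same, and your appeal to overspill/Quantifier switching cannot close this gap by itself: moderateness is an external condition, so the Colombeau data do not hand you an internal formula valid on a whole internal conic set to which overspill could be applied. The paper bridges the gap in the converse direction by (i) choosing a representative $\phi$ of the Colombeau cutoff with $\abs{\phi(x)}\ge 1/R$ and $\abs{\partial^\alpha\phi(x)}\le R$ for $x\approx_\fast x_0$ with $R\approx 1$ (one cannot arrange $\phi\equiv 1$ exactly from the quotient-level statement, which is why condition (3) of Prop.\ \ref{def-regular-with-cutoff}, not (2), is the one used); (ii) noting $\fourier{\phi u}\in\Mod_\Schwartz(\ster\R^d)$, whose decay gives $\fourier{\phi u}(\xi)\approxeq 0$ on $\ext(\ster\R^d_\Mod)$; and (iii) applying Cor.\ \ref{cor-zero-on-union} to pass from $\ster\R^d_\Mod\cup\ext(\ster\R^d_\Mod)$ to the full internal sets $B_n=\{\xi\in\ster\R^d:\abs\xi\ge\caninf^{-1/n},\ \abs{\frac\xi{\abs\xi}-\xi_0}\le\caninf^{1/n}\}$, and only then letting $n$ range over $\N$. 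None of these steps appears in your proposal, so the implication from $\widetilde\Gen^\infty$-regularity of $\widetilde u$ to $\Mod^\infty$-regularity of $u$ remains unproved.

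What you do carry out correctly, namely the invariance of $\Mod^\infty$-microlocal regularity under replacing $u$, $x_0$, $\xi_0$ by $\approxeq$-equivalent objects (via Lemma \ref{lemma-fourier}(1),(4)), is sound and in substance reproduces the easy direction of the paper's argument, which indeed only needs that negligible perturbations do not affect \eqref{eq-def-M-infty-mr}. But this only shows that the internal notion descends to $(\widetilde u,\widetilde x_0,\widetilde\xi_0)$; it does not identify the descended property with the $\widetilde\Gen^\infty$ notion, and that identification is precisely the half where the moderate-versus-arbitrary-internal-point issue above must be confronted.
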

\begin{proof}
$u$ is $\Mod^\infty$-microlocally regular at $(x_0,\xi_0)$ iff there exists $\phi\in\Mod_c^\infty(\ster\Omega_c)$ which satisfies eq.\ \eqref{eq-def-M-infty-mr}. Then clearly also
\begin{equation}\label{eq-phi-tilde}
\widetilde \phi(x)=1\text{ in }\GenC,\  \forall x\in\GenR^d, x\approx_\fast \widetilde x_0\text{ \ and \ }\Fourier(\widetilde \phi \widetilde u)(\xi)= 0\text{ in }\GenC,\ \forall \xi \in \GenR^d_{fs},\ \frac\xi{\abs\xi}\approx_\fast \widetilde \xi_0,
\end{equation}
i.e., $\widetilde u$ is $\widetilde\Gen^\infty$-microlocally regular at $(\widetilde x_0,\widetilde \xi_0)$.

Conversely, if $\widetilde\phi\in\Gen^\infty_c(\Omega)$ satisfies eq.\ \eqref{eq-phi-tilde}, then we can find a representative $\phi\in\Mod^\infty_c(\ster\Omega_c)$ which satisfies $\abs{\phi(x)}\ge 1/R$ and $\abs{\partial^\alpha \phi(x)}\le R$ for each $x\in\ster\R^d$, $x\approx_\fast x_0$ and $\alpha\in\N^d$, for some $R\approx 1$. Further, $\fourier{\phi u}(\xi)\approxeq 0$ for each $\xi\in\ster\R^d_{fs}\cap \ster\R^d_\Mod$ with $\frac\xi{\abs\xi}\approx_\fast \xi_0$. As $\fourier{\phi u}\in\Mod_\Schwartz(\R^d)$, also $\fourier{\phi u}(\xi)\approxeq 0$ for each $\xi\in\ext(\ster\R^d_\Mod)$. Let $n\in\N$ and $B_n:=\{\xi\in\ster\R^d: \abs{\xi}\ge \caninf^{-1/n}$, $\abs[\big]{\frac\xi{\abs\xi}- \xi_0}\le \caninf^{1/n}\}$. By Cor.\ \ref{cor-zero-on-union}, $\fourier{\phi u}(\xi)\approxeq 0$ for each $\xi \in B_n$. As $n\in\N$ is arbitrary, $\fourier{\phi u}(\xi)\approxeq 0$ for each $\xi\in\ster\R^d_{fs}$ with $\frac\xi{\abs\xi}\approx_\fast \xi_0$. Thus $u$ is $\Mod^\infty$-microlocally regular at $(x_0,\xi_0)$ by Prop.\ \ref{def-regular-with-cutoff}.
\end{proof}
Using this lemma, one easily recovers \cite[Thms.\ 4.5 and 5.3]{HVMicrolocal} about Colombeau generalized functions by factorization modulo negligible elements from Theorems \ref{proj-of-WF-is-singular-support} and \ref{consistency}.

\section*{Acknowledgment}
We are grateful to P.\ Giordano for pointing out references to earlier work on constructive versions of nonstandard principles.

\end{document}